\newtheorem{theorem}{Theorem}
\newtheorem{lemma}[theorem]{Lemma}
\newtheorem{corollary}[theorem]{Corollary}
\newtheorem{conjecture}[theorem]{Conjecture}
\newtheorem{question}[theorem]{Question}
\def\Int{\operatorname{Int}}
\def\CC{\mathbb{C}}
\def\EE{\mathbb{E}}
\def\HH{\mathbb{H}}
\def\NN{\mathbb{N}}
\def\PP{\mathbb{P}}
\def\RR{\mathbb{R}}
\def\XX{\mathcal{X}}
\def\ZZ{\mathbb{Z}}
\def\rr{{\mathcal r}}
\def\SS{{\mathcal S}}
\def\SSS{\Int{\mathcal S}}
\def\triD{\vec{\mathbb{T}}}
\def\ZZD{\vec{\mathbb{Z}}}
\def\tri{\mathbb{T}}
\def\UU{\mathcal{U}}
\def\tikzscale{1.2}
\def\latticescale{0.8}
\def\barrierscale{0.7}
\def\equivalenceScale{1.2}
\def\sqrtthree{1.73205080757}
\newcommand*\rows{6}
\newcommand*\halfrows{3}
\newcommand*\halfrowsminus{2}
\newcommand{\Bin}{\operatorname{Bin}}
\renewcommand{\leq}{\leqslant}
\renewcommand{\geq}{\geqslant}
\renewcommand{\to}{\rightarrow}
\def\dist{\mathop{{\rm dist}}\nolimits}
\newcommand{\closure}[1]{\left [ #1 \right ]}
\newenvironment{proofOfLemma}{\begin{proof}[\textit{Proof}]}{\end{proof}}
\newenvironment{proofOfDTBP}{\begin{proof}[\textit{Proof of the lower bound in Corollary \ref{cor:DTBP}}]}{\end{proof}}
\newenvironment{proofOfMain}{\begin{proof}[\textit{Proof of Theorem \ref{thm:subpc}}]}{\end{proof}}
\title[Subcritical $\UU$-bootstrap percolation]{Subcritical $\UU$-bootstrap percolation models have non-trivial phase transitions}
\author{Paul Balister}
\address{Department of Mathematical Sciences, University of Memphis, Memphis, Tennessee 38152, USA}
\email{pbalistr@memphis.edu}
\author{B{\'e}la Bollob{\'a}s}
\address{Department of Pure Mathematics and Mathematical Statistics, University of Cambridge, Wilberforce Road, Cambridge CB3 0WB, UK, and Department of Mathematical Sciences, University of Memphis, Memphis, Tennessee 38152, USA, and London Institute for Mathematical Sciences, 35a South St, Mayfair, London W1K 2XF, UK}
\email{b.bollobas@dpmms.cam.ac.uk}
\author{Micha{\l} Przykucki}
\address{Department of Pure Mathematics and Mathematical Statistics, University of Cambridge, Wilberforce Road, Cambridge CB3 0WB, UK, and London Institute for Mathematical Sciences, 35a South St, Mayfair, London W1K 2XF, UK}
\email{mp@lims.ac.uk}
\author{Paul Smith}
\address{IMPA, 110 Estrada Dona Castorina, Jardim Bot{\^a}nico, Rio de Janeiro, 22460-320, Brazil}
\email{psmith@impa.br}
\thanks{The second author is partially supported by NSF grant DMS~1301614 and MULTIPLEX no. 317532. The third author is supported by MULTIPLEX no. 317532. The fourth author is supported by a CNPq bolsa PDJ}
\keywords{Bootstrap percolation, phase transitions}
\subjclass[2010]{60K35, 82B26, 60C05}
\begin{document}

\begin{abstract}

We prove that there exist natural generalizations of the classical bootstrap percolation model on $\ZZ^2$ that have non-trivial critical probabilities, and moreover we characterize all homogeneous, local, monotone models with this property.

Van Enter \cite{strayleysargument} (in the case $d=r=2$) and Schonmann \cite{cellularbehaviour} (for all $d \geq r \geq 2$) proved that $r$-neighbour bootstrap percolation models have trivial critical probabilities on $\ZZ^d$ for every choice of the parameters $d \geq r \geq 2$: that is, an initial set of density $p$ almost surely percolates $\ZZ^d$ for every $p>0$. These results effectively ended the study of bootstrap percolation on infinite lattices.

Recently Bollob\'as, Smith and Uzzell \cite{neighbourhoodBootstrap} introduced a broad class of percolation models called $\UU$-bootstrap percolation, which includes $r$-neighbour bootstrap percolation as a special case. They divided two-dimensional $\UU$-bootstrap percolation models into three classes -- subcritical, critical and supercritical -- and they proved that, like classical 2-neighbour bootstrap percolation, critical and supercritical $\UU$-bootstrap percolation models have trivial critical probabilities on $\ZZ^2$. They left open the question as to what happens in the case of subcritical families. In this paper we answer that question: we show that every subcritical $\UU$-bootstrap percolation model has a non-trivial critical probability on $\ZZ^2$. This is new except for a certain `degenerate' subclass of symmetric models that can be coupled from below with oriented site percolation. Our results re-open the study of critical probabilities in bootstrap percolation on infinite lattices, and they allow one to ask many questions of subcritical bootstrap percolation models that are typically asked of site or bond percolation.

\end{abstract}

\maketitle

\section{Introduction}

\subsection{Bootstrap percolation on infinite lattices}

The classical $r$-neighbour bootstrap percolation model was introduced by Chalupa, Leath and Reich \cite{bootstrapbethe} in order to model certain physical interacting particle systems. Given a graph $G=(V,E)$, usually taken to be $\ZZ^d$ or $[n]^d$, a subset $A \subset V$ of the set of vertices of $G$ is chosen by including vertices independently at random with probability $p$. We write $A \sim \Bin(V,p)$ to denote that the set $A$ has this distribution and $\PP_p$ for the product probability measure. The vertices in $A$ are said to be \textit{infected}. Set $A_0 = A$ and then, for $t=0,1,2,\ldots$, let
\[
A_{t+1} = A_{t} \cup \big\{ v \in V \, : \, |N(v)\cap A_t|\geq r \big\},
\]
where $N(v)$ is the set of neighbours of $v$ in $G$. Thus, infected vertices remain infected forever, and uninfected vertices become infected when at least $r$ of their neighbours in $G$ are infected. The \emph{closure} of $A$ is the set $[A]=\bigcup_{t=0}^\infty A_t$ of all vertices that are eventually infected. When $[A]=V$ we say that \emph{$A$ percolates $G$}, or simply that \emph{$A$ percolates}. We say that $A$ is \textit{closed under percolation} if $\closure{A} = A$.


One would like to know under what conditions on $G$ and $p$ it is likely that $A$ percolates $G$, so it is natural to define the \emph{critical probability} $p_c(G,r)$ by
\begin{equation}
\label{eq:p_c}
p_c(G,r) = \inf \{ p \, : \, \PP_p([A]=V(G)) \geq 1/2 \}.
\end{equation}
In the case $G=\ZZ^d$, by ergodicity (since the event that $A$ percolates $G$ is translation invariant), the probability that $A$ percolates $G$ is either $0$ or $1$. Hence, on $G=\ZZ^d$, in equation \eqref{eq:p_c} it is more natural to consider $\PP_p([A]=\ZZ^d) = 1$ instead of $\PP_p([A]=\ZZ^d) \geq 1/2$.

The first result in the field of bootstrap percolation was due to van Enter \cite{strayleysargument}, who proved in the case $d=r=2$ that for every positive initial density $p$ there is percolation almost surely, and hence that $p_c(\ZZ^2,2)=0$. This was later greatly generalized by Schonmann \cite{cellularbehaviour}, who showed that
\[
p_c(\ZZ^d,r) = \begin{cases} 0 & \text{if } 1\leq r\leq d, \\ 1 & \text{if } d+1\leq r\leq 2d. \end{cases}
\]
(The cases $r=1$ and $d+1\leq r\leq 2d$ are trivial; the content of the theorem is the assertion when $2\leq r\leq d$.)


The results of van Enter and Schonmann to a large extent ended the study of bootstrap percolation on infinite lattices. However, Aizenman and Lebowitz \cite{metastabilityeffects} recognized that bootstrap percolation exhibited interesting finite-size effects: on finite grids $[n]^d$, there is a certain metastability threshold for the initial density $p$, below which with high probability there is no percolation, and above which with high probability there is percolation. More precisely, Aizenman and Lebowitz showed that $p_c([n]^d,2)=\Theta\big((\log n)^{-(d-1)}\big)$. Holroyd \cite{sharpmetastability} later proved that $p_c([n]^2,2)=(1+o(1))\pi^2/18\log n$, and Gravner, Holroyd and Morris \cite{sharperThreshold} and Morris \cite{secondTerm} obtained bounds on the second order term. Cerf and Cirillo \cite{scalingthree} ($d=r=3$) and Cerf and Manzo \cite{regimefinite} ($d \geq r \geq 3$) determined $p_c([n]^d,r)$ up to a constant for all $r \geq 3$, and Balogh, Bollob{\'a}s and Morris \cite{bootstrapthree} ($d=r=3$) and Balogh, Bollob{\'a}s, Duminil-Copin and Morris \cite{sharpbootstrapall} ($d \geq r \geq 3$) determined the constant for all $r \geq 3$.

Returning to infinite lattices, except for a small number of degenerate examples, which we discuss in Section~\ref{sec:degenerate}, all of the bootstrap percolation models on $\ZZ^d$ and other lattices that have so far been studied have been shown to have critical probabilities on the appropriate infinite lattice equal to either $0$ or $1$. These include the $r$-neighbour model on $\ZZ^d$, the $r$-neighbour model on general lattices embedded in $\ZZ^d$ studied by Gravner and Griffeath \cite{thresholdGrowth}, the Duarte model studied by Schonmann \cite{criticalPoints} and Mountford \cite{mountfordDuarte}, and numerous other models (see, for example, \cite{BringMahl,vEH,HLR}). In a recent paper, Bollob\'as, Smith and Uzzell \cite{neighbourhoodBootstrap} introduced a new class of percolation models, called \emph{$\UU$-bootstrap percolation}, which contains bootstrap percolation as a special case. They showed that many $\UU$-bootstrap percolation models on $\ZZ^2$ (those which they termed \emph{supercritical} or \emph{critical}) also have critical probabilities equal to zero. They also conjectured that the remaining models (those which they termed \emph{subcritical}) have strictly positive critical probabilities. In this paper we prove this conjecture. Together with the results in \cite{neighbourhoodBootstrap}, this gives a complete characterization of bootstrap-like models on $\ZZ^2$ that have non-trivial critical probabilities, under some natural assumptions listed in the next subsection.

\subsection{$\UU$-bootstrap percolation}
\label{sec:gbpDefinition}

Under $\UU$-bootstrap percolation, new infections are made according to any rule that is local (the rule depends on a bounded neighbourhood of the vertex), homogeneous (the same rule applies to every vertex) and monotone (the set of neighbourhoods that infect a given site is an up-set). The formal definition is as follows. Let $\UU=\{X_1,\dots,X_m\}$ be a finite collection of finite, non-empty subsets of $\ZZ^d\setminus\{0\}$ and let $A=A_0\subset\ZZ^d$. Then for each $t\geq 0$, let
\[
A_{t+1} = A_{t} \cup \big\{ x\in\ZZ^d \, : \, \text{there exists } i\in[m] \text{ such that } X_i+x\subset A_t \big\}.
\]
The set $\UU$ is called an \emph{update family} and the sets $X_i$ \emph{update rules}. The $r$-neighbour model on $\ZZ^d$ is clearly an example of a $\UU$-bootstrap percolation model: it consists of $\binom{2d}{r}$ update rules, one for each $r$-subset of the neighbours of the origin. We again write $\closure{A}$ for the set of all vertices that eventually become infected, and say that $A$ is \textit{closed under $\UU$} if we have $\closure{A} = A$.

For the rest of the paper we shall restrict our attention to the case $d=2$. The rough behaviour of two-dimensional $\UU$-bootstrap percolation is determined by the action of the dynamics on discrete half planes. We use the notation $S^1$ for the unit circle in $\RR^2$ and for each $u \in S^1$ we let $\HH_u$ denote the discrete half plane $\{x\in\ZZ^2:\langle x,u\rangle<0\}$. An element $u \in S^1$ is said to be a \emph{stable direction} for the update family $\UU$ if $[\HH_u]=\HH_u$; that is, if no new sites become infected when the initial set is equal to the half plane $\HH_u$. Otherwise $u$ is said to be an \emph{unstable direction} for $\UU$. For every update family $\UU$ and every $u \in S^1$, the closure of $\HH_u$ is either $\HH_u$ or the whole plane $\ZZ^2$. The \emph{stable set} $\SS$ for $\UU$ is the set
\[
\SS = \SS(\UU) = \{u \in S^1 : \text{$u$ is stable for $\UU$}\}.
\]
We say that an update rule $X$ \textit{destabilizes} a direction $u \in S^1$ if for $\UU = \{X\}$ we have $u \notin \SS(\UU)$. One can easily show (see Theorem~1.10 of \cite{neighbourhoodBootstrap}) that a subset $\SS$ of the circle $S^1$ is the stable set of some update family $\UU$ if and only if $\SS$ can be expressed as a finite union of closed intervals in $S^1$ whose end-points have rational or infinite slope relative to the standard basis vectors.

Let ${\mathcal T} = \RR/2 \pi \ZZ$. We shall frequently need to change between elements of $S^1$ and elements of ${\mathcal T}$; in order to do this we define the natural bijection $u : {\mathcal T} \to S^1$ by $u(\theta) = (\cos \theta, \sin \theta)$, and we set $\theta = u^{-1}$ to be its inverse function.

We define the \textit{strongly stable set} $\SSS(\UU)$ for $\UU$ to be the interior of $\SS$, i.e.,
\[
\SSS = \SSS(\UU) = \{u \in S^1 : \exists \, \varepsilon > 0 \text{ such that if } |\theta(u) - \theta(v)| < \varepsilon \text{ then } v \in \SS\}.
\]
If $u \in \SSS$ then we say that $u$ is a \textit{strongly stable direction}. Clearly, any strongly stable direction is also a stable direction.

Bollob\'as, Smith and Uzzell divided $\UU$-bootstrap percolation models into three classes according to the structure of the stable set. They defined the update family $\UU$ to be:
\begin{enumerate}
\item \emph{supercritical} if there exists an open semicircle in $S^1$ that is disjoint from $\SS$; that is, if there do not exist three stable directions $u_1$, $u_2$ and $u_3$ such that the origin belongs to the interior of the triangle with vertices at $u_1$, $u_2$ and $u_3$;
\item \emph{critical} if every open semicircle in $S^1$ has non-empty intersection with $\SS$, but there exists a semicircle in $S^1$ that is disjoint from $\SSS$; that is, if there exist three stable directions $u_1$, $u_2$ and $u_3$ such that the origin belongs to the interior of the triangle with vertices at $u_1$, $u_2$ and $u_3$, but no such three strongly stable directions exist;
\item \emph{subcritical} if every open semicircle in $S^1$ has non-empty intersection with $\SSS$; that is, if there exist three strongly stable directions $u_1$, $u_2$ and $u_3$ such that the origin belongs to the interior of the triangle with vertices at $u_1$, $u_2$ and $u_3$.
\end{enumerate}
Analogously to $r$-neighbour bootstrap percolation, we define $p_c(\ZZ^2,\UU)$ to be the infimum of those values of $p$ for which percolation occurs almost surely under update family $\UU$. In \cite{neighbourhoodBootstrap} the authors show that if $\UU$ is either supercritical or critical then $p_c(\ZZ^2,\UU)=0$. In fact, they show considerably more: letting
\[
p_c(\ZZ^2,\UU,t) = \inf\big\{ p \, : \, \PP_p(0\in A_t) \geq 1/2 \big\},
\]
they show that $p_c(\ZZ^2,\UU,t)=t^{-\Theta(1)}$ when $\UU$ is supercritical and $p_c(\ZZ^2,\UU,t)=(\log t)^{-\Theta(1)}$ when $\UU$ is critical. (Considerably stronger results for critical models have since been proved by Bollob\'as, Duminil-Copin, Morris and Smith~\cite{BDCMS}.) They also conjecture that $p_c(\ZZ^2,\UU)>0$ when $\UU$ is subcritical. Here we prove that conjecture. The following is the main theorem of this paper.

\begin{theorem}\label{thm:subpc}
Let $\UU$ be a subcritical update family and let $A\sim\Bin(\ZZ^2,p)$. Then
\[
\PP_p\big(0\in[A]\big) \to 0 \quad \text{as} \quad p \to 0.
\]
In particular, $p_c(\ZZ^2,\UU) > 0$. Furthermore, $p_c(\ZZ^2,\UU)=1$ if and only if $\SS=S^1$.
\end{theorem}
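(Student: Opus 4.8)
The plan is to exploit the defining property of a subcritical family: there exist three strongly stable directions $u_1,u_2,u_3$ with the origin in the interior of their triangle. By definition of $\SSS$, around each $u_i$ there is an arc of stable directions, so in fact there is a closed neighbourhood of arcs, and one can extract from them a finite collection of \emph{rational} stable directions $v_1,\dots,v_k$, still surrounding the origin, such that each $v_j$ lies in the interior of $\SS$. The key geometric input is that a finite intersection of half planes $\bigcap_j \HH_{v_j}$ whose normals positively span $\RR^2$ is a bounded convex polygon (after translation), and, crucially, since each $v_j$ is \emph{strongly} stable, a slightly ``fattened'' polygon --- the intersection of half planes $\HH_{v_j'}$ for directions $v_j'$ close to $v_j$ --- is still closed under $\UU$, because locally near any edge the configuration looks like a stable half plane. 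I would make this precise as a lemma: \emph{for a subcritical $\UU$ there is a constant $c = c(\UU)>0$ and arbitrarily large closed, $\UU$-closed polygons $P$ (say containing a ball of radius $R$) whose boundary can be covered by $O(R)$ translates of half planes $\HH_{v}$ with $v$ a strongly stable rational direction, and such that any such $P$ remains $\UU$-closed even after removing from $P$ all sites within distance $c$ of the boundary only if that boundary region contains no infected sites.}

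With such ``blocking'' polygons in hand, the strategy is standard for proving non-percolation at small $p$: show that with high probability the origin is surrounded by a $\UU$-closed set avoiding $A$. Concretely, fix a large polygon shape $P$ of ``radius'' $R$ centred at the origin; the set $\ZZ^2\setminus[A]$ will contain a translate/scaled copy of $P$ enclosing $0$ provided that the annular boundary region of width $c$ around $\partial P$ is free of initially infected sites \emph{and} that no infection can leak in from outside --- but by the lemma, a clean boundary of a $\UU$-closed polygon blocks all infection, so it suffices that the width-$c$ boundary annulus of \emph{some} translate of $P$ near the origin is empty in $A$. The probability that a given site is in $A$ is $p$, the boundary annulus of $P$ has $O(R)$ sites, so a fixed copy is clean with probability at least $(1-p)^{O(R)}$; taking $R = R(p)\to\infty$ slowly (e.g. $R \sim p^{-1/2}$) and using translation-invariance over $\asymp R^2$ disjoint-enough candidate positions, a Borel--Cantelli / union-bound argument shows that \emph{some} clean blocking polygon around $0$ exists with probability $\to 1$ as $p\to 0$. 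Hence $\PP_p(0\in[A])\to 0$, which gives $p_c(\ZZ^2,\UU)>0$.

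For the final ``furthermore'': if $\SS = S^1$ then \emph{every} half plane is closed under $\UU$, so in particular $\closure{A}=A$ for every $A$ (one checks that no update rule can ever fire, since each $X_i$ lies in some half plane $\HH_u$ disjoint from $\{0\}$, and $u\in\SS$ forces $0\notin[\HH_u]$, meaning $X_i\subset\HH_u$ cannot infect $0$); thus percolation is impossible for any $p<1$ and $p_c = 1$. Conversely, if $\SS\neq S^1$ there is an unstable direction $u$, and then a half plane is not closed; I would argue that for $p$ close enough to $1$ the infected set contains, with probability $1$, arbitrarily large ``almost half planes'' which then grow in the unstable direction and, by combining finitely many unstable directions covering enough of $S^1$ (or iterating), eventually fill $\ZZ^2$ --- so $p_c<1$. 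The main obstacle is the lemma on thickened $\UU$-closed polygons: one must show that strong stability of the finitely many chosen edge-normals is genuinely enough to block \emph{all} update rules of $\UU$ along the entire boundary, including near the corners where two half-plane conditions meet; handling the corners (using that the polygon is large compared to the bounded range of $\UU$, so corners are ``far apart'' and each update rule near an edge sees only that edge's half plane) is the delicate point.
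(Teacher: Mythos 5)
There is a genuine gap, and it is not a technicality but the central issue that the paper's multi-scale argument exists to overcome. Your strategy is to surround the origin with a $\UU$-closed polygon $P$ whose edges are perpendicular to strongly stable directions, require the width-$c$ boundary annulus of $P$ to be free of initially infected sites, and conclude that infection cannot reach $0$. However, stability of the outward edge normals $u_1,u_2,u_3$ only guarantees that infection \emph{inside} $P$ cannot escape across an edge; it says nothing about infection \emph{outside} $P$ creeping in. Near a straight edge of $P$ with outward normal $u_t$, the complement $\ZZ^2\setminus P$ is locally a translate of the half plane $\HH_{-u_t}$, so to prevent a dense configuration outside $P$ from eating through your clean annulus you would need $-u_t$ to be stable as well. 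Subcriticality does not give you this: for the DTBP family $\UU_1$, for instance, the stable set $\SS(\UU_1)$ contains no pair of antipodal directions at all, so whatever triangle you draw around the origin has at least one edge across which a full half plane of infection would flood inwards. (This is precisely the dividing line between the ``symmetric'' models of Section~\ref{sec:degenerate}, where $\{u,-u\}\subset\SSS$ lets one build a bilaterally stable barrier, and the typical subcritical models; and even in the symmetric case the correct object is an \emph{unbounded} barrier strip as in Theorem~\ref{thm:percolationCoupling}, not a bounded polygon, since a bounded polygon would need three positively spanning normals all of whose negatives are also strongly stable, which a symmetric family need not possess.)

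This is exactly why the paper works with the opposite geometry: the triangular covers of Section~\ref{sec:covers} enclose the \emph{infected} regions, so that only the outward stability of $u_1,u_2,u_3$ is needed (Lemma~\ref{lem:bordersSurvive}), and the difficulty is relocated to showing, via the renormalization of Theorem~\ref{thm:coversExist} and Lemma~\ref{lem:barriersExist}, that $[A]$ is confined to a sparse, well-separated family of such covers, none of which is likely to contain $0$. Replacing this by a single-scale ``find a clean polygon around the origin'' argument does not work for the reason above, and your probabilistic estimate would also need repair: a boundary annulus of radius $R(p)$ being clean does not by itself preclude $0\in[A]$, since $A$ may well be nonempty (indeed of expected size $\Theta(pR^2)$) inside $P$. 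Finally, your argument for $p_c=1\Rightarrow\SS=S^1$ is also not quite right: if $\SS=S^1$ then no rule $X_i$ lies in any half plane $\HH_u$ (that is what $\SS=S^1$ means), yet rules can certainly fire --- e.g.\ $X=\{(1,0),(-1,0)\}$ destabilizes nothing but infects $0$ when both its sites are infected. The correct statement, as in the paper, is that a rule with $0$ in its convex hull can only infect sites in shallow segments near the boundary of a large healthy disk, so the disk never fills.
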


The strength of Theorem~\ref{thm:subpc} lies in its generality: we prove that the critical probability is strictly positive for \emph{every} two-dimensional bootstrap-like model for which the critical probability has not already been shown to be equal to zero.

As previously remarked, Theorem~\ref{thm:subpc} was previously only known in a small number of exceptional cases, all of which we consider to be degenerate because they exhibit a certain symmetry property which trivializes the proof. We discuss these models further in Section~\ref{sec:degenerate}.

Combined with the results of~\cite{neighbourhoodBootstrap}, Theorem~\ref{thm:subpc} has the following corollary.

\begin{corollary}\label{co:iff}
Let $\UU$ be an update family. Then $p_c(\ZZ^2,\UU)>0$ if and only if $\UU$ is subcritical.
\end{corollary}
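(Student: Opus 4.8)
The plan is to obtain Corollary~\ref{co:iff} by combining Theorem~\ref{thm:subpc} with the classification results of Bollob\'as, Smith and Uzzell~\cite{neighbourhoodBootstrap} recalled in Section~\ref{sec:gbpDefinition}. The first thing I would record is that the three conditions defining supercritical, critical and subcritical update families are mutually exclusive and jointly exhaustive: reading off the definitions, for a fixed $\UU$ exactly one of the following holds — some open semicircle of $S^1$ is disjoint from $\SS$; every open semicircle meets $\SS$ but some open semicircle is disjoint from $\SSS$; every open semicircle meets $\SSS$. (Mutual exclusivity uses $\SSS\subset\SS$; exhaustiveness is a case split on whether every open semicircle meets $\SS$ and then on whether every open semicircle meets $\SSS$, noting that a semicircle disjoint from the open set $\SSS$ may be taken open.) Hence each update family lies in exactly one of the three classes, and ``not subcritical'' means precisely ``supercritical or critical''.

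For the ``if'' direction, suppose $\UU$ is subcritical. Then Theorem~\ref{thm:subpc} gives $\PP_p(0\in[A])\to 0$ as $p\to 0$, and in particular $p_c(\ZZ^2,\UU)>0$ as already asserted there. For completeness I would spell out the (elementary) passage: for $p$ small enough $\PP_p(0\in[A])<1$, hence $\PP_p([A]=\ZZ^2)<1$, and since the event $\{[A]=\ZZ^2\}$ is translation invariant the $0$--$1$ law (cf.\ the discussion following~\eqref{eq:p_c}) forces $\PP_p([A]=\ZZ^2)=0$; therefore such a $p$ satisfies $p<p_c(\ZZ^2,\UU)$, so $p_c(\ZZ^2,\UU)>0$.

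For the ``only if'' direction, suppose $\UU$ is not subcritical. By the trichotomy above, $\UU$ is supercritical or critical, and in either case Bollob\'as, Smith and Uzzell proved $p_c(\ZZ^2,\UU)=0$ (indeed the sharper statements $p_c(\ZZ^2,\UU,t)=t^{-\Theta(1)}$ in the supercritical case and $p_c(\ZZ^2,\UU,t)=(\log t)^{-\Theta(1)}$ in the critical case, either of which implies $p_c(\ZZ^2,\UU)=0$). Combining the two directions yields the claimed equivalence.

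Since the corollary is a direct amalgamation of Theorem~\ref{thm:subpc} and the prior work~\cite{neighbourhoodBootstrap}, I do not expect any genuine obstacle in the deduction itself; all the difficulty is concentrated in Theorem~\ref{thm:subpc}. The only point requiring a moment's care is the verification that the three classes genuinely partition all update families — in particular that ``critical'' as defined (using ``a semicircle'' disjoint from $\SSS$) is exactly the complement, within the non-supercritical families, of ``subcritical'' — together with the routine use of the $0$--$1$ law to convert the local bound of Theorem~\ref{thm:subpc} into a strictly positive lower bound on $p_c(\ZZ^2,\UU)$.
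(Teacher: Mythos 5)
Your proposal is correct and matches the paper's (implicit) argument: the paper simply states that the corollary follows by combining Theorem~\ref{thm:subpc} with the results of \cite{neighbourhoodBootstrap}, exactly the amalgamation you carry out. Your explicit verification that the supercritical/critical/subcritical trichotomy partitions all update families, and the routine $0$--$1$ law step, are the right (and only) details to check.
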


Thus, our main theorem allows us to characterize \emph{all} update families with non-trivial critical probabilities.


\subsection{The archetypal example: bootstrap percolation on the directed triangular lattice}
\label{sec:dtbp}

Let $\triD$ denote the triangular lattice embedded in $\CC$, oriented and scaled so that $0$ and $1$ are neighbouring vertices. Let the edges of the lattice be directed, for $k=0,1,2$, in the direction $e^{(2 k+1)\pi i/3}$. In the resulting directed graph $\triD=(V,E)$, edges around any given vertex alternate in-out. (See Figure \ref{fig:lattice}.)

\begin{figure}[ht]
  \centering
\begin{tikzpicture}[scale=\latticescale,decoration={markings, mark=between positions 0 and 1 step \latticescale cm with {\arrow {stealth}}}]]
    \foreach \row in {0, ...,\rows} {
	\draw [] ($\row*(0, {0.5*sqrt(3)})$) -- ($(\rows,0)+\row*(0, {0.5*sqrt(3)})$);
	\draw [] ($\row*(0, {0.5*sqrt(3)})$) -- ($(\rows,0)+\row*(0, {0.5*sqrt(3)})$);
    }
    \foreach \row in {0, ...,\halfrows} {
	\draw [] ($\row*(1, 0)$) -- ($(\rows/2,{\rows/2*\sqrtthree})+\row*(1, 0)$);
	\draw [decorate] ($\row*(1, 0)+(1/4, {\sqrtthree/4})$) -- ($(\rows/2,{\rows/2*\sqrtthree})+(\row, 0)$);
	\draw [decorate] ($(\rows,0)+\row*(0, {\sqrtthree})-(1/2, 0)$) -- ($(0.5,0)+\row*(0, {\sqrtthree})$);
	\draw [] ($(\rows/2,{\rows/2*\sqrtthree})-\row*(1, 0)$) -- ($\row*(0, {\sqrtthree})$);
	\draw [] ($\row*(1, 0)$) -- ($\row*(0,\sqrtthree)$);
	\draw [decorate] ($(0,{\rows/2*\sqrtthree})+\row*(1,0)+(1/4, -{\sqrtthree/4})$) -- ($(\halfrows, 0)+\row*(1, 0)$);
	\draw [] ($(\halfrows, 0)+\row*(1, 0)$) -- ($(0,{\rows/2*\sqrtthree})+\row*(1,0)$);
	\foreach \site in {0, ...,\rows} {
		\filldraw ($2*\row*(0, {0.5*sqrt(3)})+\site*(1,0)$) circle (1.5pt);
	}
    }
    \foreach \row in {0, ...,\halfrowsminus} {
	\draw [decorate] ($(\rows,{\sqrtthree/2})+\row*(0, \sqrtthree)-(1, 0)$) -- ($(1,{\sqrtthree/2})+\row*(0, \sqrtthree)$);
	\draw [decorate] ($\row*(0, \sqrtthree)+(1/4, {\sqrtthree/4})$) --  ($(\halfrows,{\halfrows*\sqrtthree})-\row*(1, 0)$);
	\draw [decorate] ($(\halfrows,0)+\row*(1, 0)+(1/4, {\sqrtthree/4})$) -- ($(\rows,{\halfrows*\sqrtthree})-\row*(0, {\sqrtthree})$);
	\draw [] ($(\rows,{\halfrows*\sqrtthree})-\row*(0, {\sqrtthree})$) -- ($(\halfrows,0)+\row*(1, 0)$);
	\draw [decorate] ($(\halfrows,{\halfrows*\sqrtthree})+\row*(1,0)+(1/4,-{\sqrtthree/4})$) -- ($(\rows-1/4, {\sqrtthree/4})+\row*(0, \sqrtthree)+(1/4,-{\sqrtthree/4})$);
	\foreach \site in {1, ...,\rows} {
		\filldraw ($(-0.5,{\sqrtthree/2})+2*\row*(0, {0.5*sqrt(3)})+\site*(1,0)$) circle (1.5pt);
	}
    }
    \foreach \row in {1, ...,\halfrows} {
	\draw [decorate] ($\row*(0,\sqrtthree)+(1/4, -{\sqrtthree/4})$) -- ($\row*(1, 0)$);
	\draw [] ($(\rows, 0)+\row*(0, \sqrtthree)$) -- ($(\halfrows,{\halfrows*\sqrtthree})+\row*(1,0)$);
    }
\end{tikzpicture}
\caption{Directed triangular lattice $\triD$.}
\label{fig:lattice}
\end{figure}

Let $A_0=A\sim\Bin\big( V(\triD),p\big)$, and for each integer $t\geq 0$, define the set of infected sites at time $t+1$ to be
\[
A_{t+1} = A_{t} \cup \big\{ v\in V \, : \, |N^-(v)\cap A_t| \geq 2 \big\},
\]
where $N^-(v)$ is the set of in-neighbours of $v$ (that is, the set of vertices $u$ neighbouring $v$ such that $\overrightarrow{uv}$ is an edge). Note that $r=2$ is the only interesting value of the infection threshold for this model. We shall refer to this model as Directed Triangular Bootstrap Percolation (DTBP). It is easy to see by coupling that $p_c(\triD,2)$ is at most the critical probability for site percolation on $\tri$, the undirected triangular lattice, which is $p_c^s(\tri) = 1/2$. (See Theorem 17 in \cite{percolation}.) Indeed, by the uniqueness of the infinite cluster in percolation on $\tri$, if we initially infect the vertices of $\triD$ with probability $p \geq p_c^s(\tri)$ then almost surely all initially healthy clusters of sites will be finite, and any such region is eventually infected by the dynamics. However, it is not obvious whether $p_c(\triD,2 )$ is strictly positive. It is known that $p_c(\tri,3)=0$ (see, e.g., \cite{thresholdGrowth}) but there is no apparent coupling between the two models that we could use to deduce anything about the critical probability in the $2$-neighbour bootstrap process on $\triD$.

However, by skewing the lattice $\triD$, one can see that DTBP is equivalent to $\UU$-bootstrap percolation with update family $\UU_1 = \{X_1,X_2,X_3\}$, where $X_1 = \{(1,0),(0,1)\}$, $X_2 = \{(-1,-1),(0,1)\}$ and $X_3 = \{(-1,-1),(1,0)\}$. (See Figure \ref{fig:DTBPinZ2}.) Since $\UU_1$ is subcritical, Theorem \ref{thm:subpc} implies that
\[
0<p_c(\triD,2)<1.
\]
By analysing carefully the proof of Theorem \ref{thm:subpc}, one can in fact prove the following bounds for $p_c(\triD,2)$.

\begin{corollary}\label{cor:DTBP}
Under the DTBP subcritical $\UU$-bootstrap percolation model we have
\[
10^{-101} < p_c(\triD,2) = p_c(\ZZ^2,\UU_1) \leq 0.3118.
\]
\end{corollary}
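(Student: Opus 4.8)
The plan is to prove the three parts of Corollary~\ref{cor:DTBP} separately. The middle equality needs no further argument: the skewing of $\triD$ described above is a bijection from the vertices of $\triD$ to $\ZZ^2$ that carries the DTBP rule ``at least two in-neighbours are infected'' exactly onto the update family $\UU_1$, so the two processes -- and hence their critical probabilities -- coincide. It remains to prove the numerical bounds.

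For the \emph{upper bound} I would use a comparison with oriented site percolation. Let $A\sim\Bin(\ZZ^2,p)$ and suppose $[A]\neq\ZZ^2$; write $H=\ZZ^2\setminus[A]$ for the (non-empty) set of sites that are never infected. Under $\UU_1$ a site $y$ becomes infected exactly when at least two of $y+(1,0)$, $y+(0,1)$, $y+(-1,-1)$ have been infected, since the three rules $X_1,X_2,X_3$ are precisely the three $2$-element subsets of $\{(1,0),(0,1),(-1,-1)\}$. Hence if $x\in H$ then at most one of these three ``predecessors'' of $x$ ever belongs to $[A]$, so at least two of them lie in $H$; in particular at least one of $x+(1,0)$ and $x+(0,1)$ lies in $H$. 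Iterating, $H$ contains an infinite path $x=x_0,x_1,x_2,\dots$ with $x_{i+1}-x_i\in\{(1,0),(0,1)\}$, and since $H\cap A=\emptyset$ every $x_i$ is healthy. Thus the event $\{[A]\neq\ZZ^2\}$ is contained in the event that oriented site percolation on $\ZZ^2$ -- open sites $\ZZ^2\setminus A$, open-density $q=1-p$, admissible steps $(1,0)$ and $(0,1)$ -- contains an infinite open path. When $q$ is below the critical probability $\vec p_c$ of this oriented model there is a.s.\ no such path, so a.s.\ $[A]=\ZZ^2$; therefore $p_c(\ZZ^2,\UU_1)\le 1-\vec p_c$. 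Substituting the rigorous lower bound $\vec p_c\ge 0.6882$ for oriented site percolation on $\ZZ^2$ then gives $p_c(\ZZ^2,\UU_1)\le 0.3118$. (The stronger fact that \emph{two} of the three predecessors lie in $H$ could in principle be used, but this essentially returns one to the model under study, so the path comparison is the efficient choice.)

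For the \emph{lower bound} $p_c(\triD,2)>10^{-101}$, Theorem~\ref{thm:subpc} applied to the subcritical family $\UU_1$ only yields $p_c>0$; the explicit constant comes from re-running its proof for $\UU=\UU_1$ and tracking every constant. That proof builds around the origin a nested family of ``barriers'' -- bounded rings of healthy sites whose sides run in strongly stable directions and which the dynamics cannot cross -- at geometrically growing scales, and shows that for $p$ small some barrier a.s.\ survives. To make this quantitative for $\UU_1$ one needs: the explicit strongly stable set of $\UU_1$ (a computable union of three open arcs, which in particular contains the direction of $(1,1)$ and two further directions witnessing subcriticality), from which concrete barrier shapes can be written down; the geometric ratio between consecutive scales; an explicit upper bound on the number of barrier configurations at scale $L$; and an explicit lower bound, valid for $p$ small, on the probability that a barrier at scale $L$ carries no initial infection of the type that would let the process penetrate it. Combining these through the union bound over scales produces an explicit $p_0>0$ with $\PP_p(0\in[A])\to 0$ for $p<p_0$, and optimizing the parameters for $\UU_1$ one may take $p_0\ge 10^{-101}$.

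The main obstacle is precisely this last step. The bound $10^{-101}$ is wildly far from optimal because the multiscale estimate underlying Theorem~\ref{thm:subpc} is tuned for an arbitrary subcritical family rather than for a single model, so extracting \emph{any} explicit constant amounts to carefully controlling a long product of combinatorial and probabilistic factors. The upper bound, by contrast, is short once the reduction to oriented percolation is noticed, its only external input being the rigorous estimate for $\vec p_c$.
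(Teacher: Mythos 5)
Your treatment of the middle equality and of the upper bound is correct and follows the same route as the paper: the equality is exactly the skewing bijection, and the upper bound is the same coupling with oriented site percolation (the paper couples $\UU_1$ from below with $\UU_2=\{X_1\}$ and uses $p_c(\ZZ^2,\UU_1)\leq p_c(\ZZ^2,\UU_2)=1-p_c^s(\ZZD^2)\leq 0.3118$; your direct argument that any never-infected site begins an infinite up/right path of initially healthy sites inside $H=\ZZ^2\setminus[A]$ is the same comparison, unwound).

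The lower bound, however, is only gestured at, not proved, and this is a genuine gap: the entire content of that part of the corollary is the explicit constant, and ``re-run the proof of Theorem~\ref{thm:subpc} for $\UU_1$ and track every constant'' is not a proof of $p_c>10^{-101}$ unless one actually does the tracking. The paper does: it computes $\rr(\UU_1)=\sqrt5$, chooses the concrete strongly stable and non-forbidden directions $\theta(u_t)\in\{7\pi/24,\,23\pi/24,\,39\pi/24\}$ (evenly spaced, at angular distance $\geq\pi/24$ from $F(\UU_1)$), extracts $\varepsilon_0>0.02293\pi$, $r_{\varepsilon_0}<24.04$ and $c(\SS)=361$ from Lemma~\ref{lem:coverExists}, fixes $\alpha=1.5$, $\beta=1.45$, $\gamma=0.01$ (so $\delta=5.8$), verifies that $\Delta_1\geq 10^{13}$ meets the five conditions at the start of the proof of Theorem~\ref{thm:coversExist}, and substitutes $p=\Delta_1^{-\delta-2}$ into \eqref{eq:finalBound} to obtain $p_c>2.5\cdot 10^{-101}$. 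None of these choices or numerical checks appear in your write-up. A secondary issue is that your heuristic description of the multiscale argument (nested healthy ``rings'' around the origin; a union bound over barrier configurations at scale $L$) does not match the mechanism actually used, which is to enclose every union of adjacent $(i)$-bad squares in a tight triangular cover (Theorem~\ref{thm:coversExist}), show $[A]$ is contained in the union of those covers (Lemma~\ref{lem:mainFollows}), and bound the probability that the origin lies in any cover by $\sum_{i\geq1}(2c(\SS)+2)^2 q_i$.
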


The upper bound in Corollary \ref{cor:DTBP} is obtained by noting that DTBP can be coupled with oriented site percolation (see, for example, \cite{directedLower,directedUpper}). Indeed, $\UU$-bootstrap percolation with update family $\UU_2=\{X_1\}$ is precisely oriented site percolation: a site $v$ remains healthy forever if and only if there exists an infinite up/right path starting at $v$ of initially healthy sites.  The coupling with $\UU_2$ gives $p_c(\triD,2)\leq 1-p_c^s(\ZZD^2)\leq 0.3118$, where $p_c^s(\ZZD^2)$ is the critical probability for oriented site percolation on $\ZZ^2$, and the final inequality is due to Gray, Wierman and Smythe. For more information about percolation, see the book by Bollob{\'a}s and Riordan, \cite{percolation}.

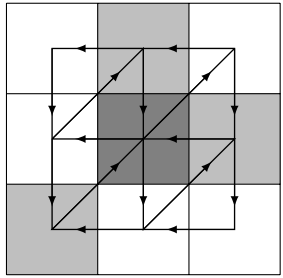
\begin{figure}[ht]
  \centering
  \begin{tikzpicture}[scale=\equivalenceScale]
    \fill[color=lightgray] (2,1) rectangle +(1,1);
    \fill[color=lightgray] (1,2) rectangle +(1,1);
    \fill[color=lightgray] (0,0) rectangle +(1,1);
    \fill[color=gray] (1,1) rectangle +(1,1);
    \draw[-,line width=0.5] (0.5,0.5) -- (0.5,2.5) -- (2.5,2.5) -- (2.5,0.5) -- (0.5,0.5) -- (2.5,2.5);
    \draw[-,line width=0.5] (0.5,1.5) -- (1.5,2.5) -- (1.5,0.5) -- (2.5,1.5) -- (0.5,1.5);
    \foreach \x/\y in {0.5/0.5, 1.5/1.5, 0.5/1.5, 1.5/0.5} {
        \draw [-latex] (\x,\y) to +(0.75,0.75);
    }
    \foreach \x/\y in {0.5/2.5, 0.5/1.5, 1.5/2.5, 1.5/1.5, 2.5/2.5, 2.5/1.5} {
        \draw [-latex] (\x,\y) to +(0,-0.75);
    }
    \foreach \x/\y in {2.5/0.5, 2.5/1.5, 2.5/2.5, 1.5/0.5, 1.5/1.5, 1.5/2.5} {
        \draw [-latex] (\x,\y) to +(-0.75,0);
    }
    \draw[step=1,black] (0,0) grid (3,3);
  \end{tikzpicture}
  \caption{The equivalence of the update family $\UU_1$ and the DTBP model; the dark grey site becomes infected when at least two of the light grey ones are.}
  \label{fig:DTBPinZ2}
\end{figure}

\begin{figure}[ht]
  \centering
  \begin{tikzpicture}[scale=\tikzscale]
    \draw[color=black] (0,0) circle (1);
    \draw [help lines,dashed,gray] (0,0) -- (1,0) arc (0:90:1) -- cycle;

    \draw [help lines,dashed,gray] (0,0) -- (-1,0) arc (180:135:1) -- cycle;

    \draw [help lines,dashed,gray] (0,0) -- (0,-1) arc (270:315:1) -- cycle;
    \draw [black,line width=2] (1,0) arc (0:90:1);
    \draw [black,line width=2] (-1,0) arc (180:135:1);
    \draw [black,line width=2] (0,-1) arc (270:315:1);
  \end{tikzpicture}
  \caption{The stable set $\SS_{1}$ for the update family $\UU_{1}$ (thick line); note that indeed every semicircle in $S^1$ intersects $\SSS_{1}$.}
  \label{fig:DTBPstableSet}
\end{figure}
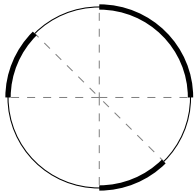

Computer experiments suggest that the true value of $p_c(\triD,2)$ is far from both the upper and lower bound in Corollary \ref{cor:DTBP}, indicating that in fact $p_c(\triD,2) \sim 0.118$. However, numerical predictions in bootstrap percolation have a long history of poor accuracy (see, e.g., \cite{sharpmetastability}), so this estimate should be taken with care.

\subsection{Symmetric models}\label{sec:degenerate}


Apart from oriented site percolation, other previously studied subcritical $\UU$-bootstrap percolation models include the model
\[\UU = \big\{\{(1,0),(0,1)\},\{(-1,0),(0,-1)\}\big\},\]
studied by Schonmann \cite{criticalPoints}; the knights, spiral and sandwich models, studied by Biroli and Toninelli \cite{glassTransition} and by Jeng and Schwarz \cite{jammingPercolation}; and the force-balance models, studied by Jeng and Schwarz \cite{forceBalance}. We would like to emphasize that none of these models is `typical' of the general model we study in this paper, in the following specific sense.

Let us say that a (necessarily subcritical) model $\UU$ is \emph{symmetric} if the following property holds: there exists $u\in S^1$ such that $\{u,-u\}\subset\Int\SS(\UU)$. It is easy to verify that all of the examples in the previous paragraph are symmetric. Now if $\UU$ is symmetric, then one can couple $\UU$-bootstrap percolation from below with oriented site percolation, which gives an essentially trivial proof of Theorem~\ref{thm:subpc} in the case of such models. We present this short and elementary proof in Section \ref{sec:percolationCoupling}.

In general, however, subcritical models need not be symmetric (DTBP is not symmetric, for example), and in these cases there does not seem to be a useful coupling with oriented site percolation. For such models, the lack of symmetry makes it considerably harder to control the growth of infected regions of sites, and the proof of Theorem~\ref{thm:subpc} is correspondingly more complex. Thus, the non-symmetric models are the ones that we consider to be `typical'.




\subsection{Organization of the paper}

The rest of this paper is organized as follows. In the next section we give an outline of the proof of Theorem~\ref{thm:subpc}, and we explain heuristically why one might expect the definition of a subcritical family to be the correct one. Following that, in Section~\ref{sec:notation}, we set out the standard notation we shall use, and we formalize some of the definitions relating to our construction. In Section \ref{sec:covers} we define and establish certain properties of ``barriers'' and ``triangular covers'', which will form the backbone of the coupled process we use in the proof of Theorem~\ref{thm:subpc}. In Section~\ref{sec:proof} we assemble the various tools from the previous sections in order to prove Theorem~\ref{thm:coversExist}, which is a certain statement about the existence of the ``triangular covers'', and which should be thought of as the heart of Theorem~\ref{thm:subpc}. We then deduce Theorem~\ref{thm:subpc} from Theorem~\ref{thm:coversExist}. We end the paper first with Section~\ref{sec:percolationCoupling}, in which we point out that Theorem~\ref{thm:subpc} is trivial if the update family $\UU$ is assumed to be symmetric, and second with Section~\ref{sec:openProblems}, in which we discuss a range of open problems and conjectures.

\section{Outline of the proof}
\label{sec:outline}

We know that supercritical and critical families have critical probability in $\ZZ^2$ equal to $0$, so what is special about subcritical families that makes them behave differently? Let $A$ be an initial set consisting of a rectangle of width $m$ and arbitrary height, and a density $p$ of sites above the rectangle. Under the classical two-neighbour bootstrap process on $\ZZ^2$ (which in a certain sense is representative of the behaviour of all critical processes), the infection spreads upwards from the rectangle, filling every line completely until it meets a fully healthy double line. The expected number of full new rows infected in the process is about $(1-p)^{-2m}$. The key property here is that a single site just above a full row will infect all other sites on the same row. In other words, if $R$ is the rectangle and $x$ a site next to its upper edge then under the two-neighbour process there is no upper bound on $|[R\cup\{x\}]|-|R|$ that is uniform in $m$.

Now consider the behaviour of the bootstrap process under an update family $\UU$ for which $u(\pi/2)$ is a strongly stable direction, that is, there is an interval of stable directions around $u(\pi/2)$. With the same $A$ as in the previous paragraph, how many new sites do we expect the process to infect? The key is that new sites create only localized infection: the set of additionally infected sites in the closure of the union of the rectangle and a small set $B$ of infected sites just above the top edge necessarily has ``small'' size, which depends on the size of $B$, on the stable set and on some additional characteristics of $\UU$, but not on the size of the rectangle. Given $B$ we can find a small circumscribed triangle $T$ of $B$, with sides of $T$ perpendicular to some stable directions within the interval of stable directions around $u(\pi/2)$. Assuming that $u(0)$, $u(\pi)$ and $u(3\pi/2)$ are also stable directions, if the slopes of $T$ are chosen appropriately to avoid the complications arising from the \textit{forbidden directions} which we define in Section \ref{sec:chooseDirections}, we have $[R \cup B] \subset [R \cup T] = R \cup T$.

The definition of a subcritical family is as follows: there exist three strongly stable directions $u_1$, $u_2$ and $u_3$ such that the origin belongs to the interior of the triangle with vertices at $u_1$, $u_2$ and $u_3$. Let $\HH_{u,a}$ denote the shifted half-plane $\{x\in\ZZ^2:\langle x-a,u\rangle<0\}$. Then the condition that the origin lies inside the triangle with vertices at $u_1$, $u_2$ and $u_3$ implies that the triangular sets of the form $\bigcap_{i=1}^3 \HH_{u_i,a_i}$, where the $a_i$ are arbitrary points in $\RR^2$, are necessarily finite. Also, we have $[\bigcap_{i=1}^3 \HH_{u_i,a_i}] = \bigcap_{i=1}^3 \HH_{u_i,a_i}$. In Section \ref{sec:chooseDirections} we show how to choose $u_1$, $u_2$ and $u_3$ so that these triangular sets are ``robust'' in the sense that they are still closed under $\UU$ if we slightly perturb their edges, making them a little bit ``wiggly''. This is quite unlike the two-neighbour process, where the only finite connected stable sets are rectangles, and new sites on their edges cause entire new rows or columns of infection. 

In our proof of Theorem \ref{thm:subpc} we exploit the above property of subcritical update families. We show that if every site in $\ZZ^2$ is initially infected independently with some probability $p > 0$ then, if $p$ is small enough, almost surely one can find a collection of slightly perturbed triangles (as above) with the following properties:
\begin{itemize}
\item every eventually infected site is contained in at least one triangle,
\item if two triangles have a nonempty intersection or, in fact, if they are not well separated, then one of them is contained in the other,
\item any site in $\ZZ^2$ belongs to at least one triangle with probability tending to $0$ as $p \to 0$. 
\end{itemize}
For sufficiently small $p$, the existence of a collection of triangular sets with these properties proves that the initial set does not percolate the plane, and this implies the lower bound on $p_c(\ZZ^2,\UU)$ in Theorem \ref{thm:subpc}.

We find our collection of perturbed triangles using a renormalization argument. Our method is motivated by the techniques introduced by G{\'a}cs \cite{clairvoyant} in the context of clairvoyant scheduling and a certain equivalent dependent oriented percolation model. We partition the plane using successively coarser tilings into squares of side lengths $\Delta_1 \ll \Delta_2 \ll \dots$. At each scale $\Delta_i$ we will have a notion of an \emph{$(i)$-good} $\Delta_i$-square, where ``good'' will roughly correlate with ``being sparsely infected'', and there will be a corresponding notion of an \emph{$(i)$-bad} $\Delta_i$-square. A little more precisely, a $\Delta_i$-square will be $(i)$-good if all $(i-1)$-bad $\Delta_{i-1}$-squares contained in it and in its close neighbourhood are quite strongly isolated.

Inductively we show that an $(i)$-bad $\Delta_i$-square contained in a $(i+1)$-good $\Delta_{i+1}$-square can be enclosed in a perturbed triangle which is not too large and is well separated, for all $j \leq i$, from all $(j)$-bad $\Delta_j$-squares which are not fully contained in it. Additionally, this perturbed triangle has sides essentially perpendicular to stable directions $u_1$, $u_2$ and $u_3$, i.e., is on its own closed under $\UU$. We do this by showing simultaneously by induction that, for any $i$, one can always find a ``thick'' healthy barrier through $(i)$-good $\Delta_i$-squares, disjoint from $(j)$-bad squares for all $j < i$. Since an $(i)$-bad $\Delta_i$-square contained in an $(i+1)$-good $\Delta_{i+1}$-square is necessarily surrounded by $(i)$-good $\Delta_i$-squares, this allows us to construct the triangular sets which enclose our eventually infected area.

The main task is the second part of the induction: to show that one can construct barriers through $(i)$-good $\Delta_i$-squares. The idea is that, since all $(i-1)$-bad $\Delta_{i-1}$-squares contained in an $(i)$-good $\Delta_i$-squares are quite strongly isolated, it is possible to ``navigate around'' these $(i-1)$-bad $\Delta_{i-1}$-squares without straying too far from a straight line, and to use the induction hypothesis to construct the barrier through the $(i)$-good $\Delta_i$-squares out of consecutive sub-barriers through $(i-1)$-good $\Delta_{i-1}$-squares.

In order to be a little more precise, suppose we are trying to construct a healthy barrier between sites $x$ and $y$, where these are such that the line $\ell$ joining them is roughly perpendicular to $u_1$ and only passes through $(i)$-good $\Delta_i$ squares. We shall show that there exist certain ``(i)-clean sites'' $c_1,\dots,c_k$, all of which lie close to $\ell$, such that the union of the lines joining $x$ to $c_1$, $c_1$ to $c_2$, and so forth, up to $c_k$ to $y$, only passes through $(i-1)$-good $\Delta_{i-1}$-squares. By induction, it follows that there exists a healthy barrier joining $x$ to $c_1$, etc., and one can show that it is possible to control these sufficiently such that their union is again a healthy barrier, but at the next scale. Thus, the edges of the perturbed triangles that we construct are in fact perturbed \emph{at all scales}.

This is the only part of the proof where we use the subcriticality of the update family and for that reason it is the most important part of our argument. The assertion that one can always find these perturbed triangles is Theorem~\ref{thm:coversExist}, and the (key) sub-assertion that one can always find these healthy barriers is Lemma~\ref{lem:barriersExist}: these two results should be regarded as the heart of Theorem~\ref{thm:subpc}.

\section{Additional notation and definitions}\label{sec:notation}

\subsection{Notation}

Given two sites $a, b \in \ZZ^2$ we define $\dist(a,b) = \|a-b\|_2$. For any two sets $A, B \subset \ZZ^2$ we then take
\[
 \dist (A,B) = \min_{a \in A, \, b \in B} \dist(a,b).
\]

For an update family $\UU$ we define
\[
 \rr(\UU) = \max_{i \in [m]} \: \max_{a,b \in X_i} \dist(a,b). 
\]
Hence, in particular, if $A$ is a set of initially infected sites such that any two distinct sites in $A$ are at distance larger than $\rr(\UU)$ then under update family $\UU$ we have $[A] = A$.

Given two sites $a, b \in \ZZ^2$, $a \neq b$, let
\[
 u_{a,b} = \frac{b-a}{\dist(a,b)} \in S^1.
\]

Subcritical update families are those for which there exist three strongly stable directions $u_1$, $u_2$ and $u_3$ such that the origin belongs to the interior of the triangle with vertices at $u_1$, $u_2$ and $u_3$. This can be rephrased as: there exist three distinct stable directions $u_1, u_2, u_3$ and positive numbers $\lambda_1, \lambda_2, \lambda_3, \varepsilon > 0$ such that
\begin{enumerate}
\item we have
\begin{equation}\label{eq:vecsum}
\lambda_1 u_1 + \lambda_2 u_2 + \lambda_3 u_3 = 0,
\end{equation}
\item for $t=1,2,3$,
\begin{equation}
\label{eq:uWithInters}
 \{u: |\theta(u_t)-\theta(u)| < \varepsilon \} \subset \SS.
\end{equation}
\end{enumerate}
To simplify our proof we will, somewhat counterintuitively, take the $\varepsilon$ in \eqref{eq:uWithInters} to be very small (which we are of course free to do).

\subsection{Choice of strongly stable directions and the first bound on $\varepsilon$}
\label{sec:chooseDirections}

In this section we choose our strongly stable directions $u_1$, $u_2$ and $u_3$, and we give a first upper bound on $\varepsilon$ in \eqref{eq:uWithInters}. The reason why we impose these particular conditions on our parameters will become clear in the proof of Lemma \ref{lem:bordersSurvive} in Section \ref{sec:covers}. Note that if $u_0$ is a strongly stable direction such that $N_\varepsilon(u_0) = \{u: |\theta(u_0)-\theta(u)| < \varepsilon \} \subset \SS$ then clearly also $N_\varepsilon(u_0) \subset \SSS$, i.e., all directions in $N_\varepsilon(u_0)$ are strongly stable. This means that the existence of one triple of strongly stable directions satisfying \eqref{eq:vecsum} implies the existence of infinitely many such triples.

Given an update family $\UU = \{X_1, \ldots, X_m\}$, we say that a direction $u \in S^1$ is \textit{forbidden for $\UU$} if it is perpendicular to at least one side of the convex hull of at least one of the update rules $X_i$ (note that every side of any convex hull forbids $2$ opposite directions). Let $F(\UU) = \{u: u \text{ is forbidden for } \UU\}$ be the set of directions forbidden for $\UU$. For example, for the update family $\UU_1$ equivalent to DTBP introduced in Section \ref{sec:dtbp} we have
\[
 \begin{split}
 F(\UU_1) = \Bigg \{ & \left (\frac{\sqrt{2}}{2}, \frac{\sqrt{2}}{2} \right ), \left (-\frac{\sqrt{2}}{2}, -\frac{\sqrt{2}}{2} \right ), \left (-\frac{2 \sqrt{5}}{5}, \frac{\sqrt{5}}{5} \right ), \\
						& \left (\frac{2 \sqrt{5}}{5}, -\frac{\sqrt{5}}{5} \right ), \left (-\frac{\sqrt{5}}{5}, \frac{2\sqrt{5}}{5} \right ), \left (\frac{\sqrt{5}}{5}, -\frac{2\sqrt{5}}{5} \right ) \Bigg \}.
 \end{split}
\]

Since $F(\UU)$ is a finite set, we can choose our strongly stable directions $u_1, u_2, u_3 \in \SSS(\UU) \setminus F(\UU)$ and let $\varepsilon (u_1, u_2, u_3)$ be small enough so that for $i = 1,2,3$, we have
\begin{equation}
\label{eq:avoidForbidden}
N_{\varepsilon(u_1, u_2, u_3)}(u_i) \subset \SSS \setminus F(\UU).
\end{equation}
To simplify our proof, from now on we assume that in \eqref{eq:uWithInters} we have $\varepsilon \leq \varepsilon(u_1, u_2, u_3)$.

\subsection{Good squares}
\label{sec:goodSquares}

Let us now define more precisely the tilings of $\ZZ^2$ we will work with in this paper, as well as the concepts of \textit{good} and \textit{bad} squares. The coarseness of our tilings and the definitions of good and bad squares will depend on the following parameters. Let
\begin{equation}
\label{eq:parameters}
1 < 1+\gamma < \beta < \alpha < 2 \qquad \text{and} \qquad \delta = (2\alpha+2\beta-3)/(2-\alpha).
\end{equation}
Let $\{\Delta_i\}_{i=1}^\infty$ be an increasing sequence of natural numbers with
\[
\Delta_{i+1} = \min \{n \in \NN: n \geq \Delta_i^{\alpha} \text{ and } n \text{ is a multiple of } \Delta_i \},
\]
with $\Delta_1 \geq \rr(\UU)$ to be defined later. For $i \geq 1$ let
\[
q_i = \Delta_i^{-\delta}, \qquad g_i = \Delta_i^{\beta}, \qquad \text{and} \qquad \sigma_i = \varepsilon/2 + \varepsilon \Delta_i^{-\gamma}.
\]
Note that, since $\alpha > \beta > 1$, we have $\Delta_{i+1} > g_i > \Delta_i$.

For each $i \geq 1$ let us consider an $(i)$-tiling of $\ZZ^2$ with $\Delta_i \times \Delta_i$ squares, i.e., a partition of $\ZZ^2$ into sets of the form
\[
\{a \Delta_i+1, a\Delta_i+2, \ldots, (a+1)\Delta_i\} \times \{b\Delta_i+1, b\Delta_i+2, \ldots, (b+1)\Delta_i\}
\]
for all $a, b \in \ZZ$. Note that our $(i)$-tilings are nested, i.e., that every $\Delta_{i+1} \times \Delta_{i+1}$ square consists of $\lceil \Delta_{i}^{\alpha-1} \rceil ^2$ squares of side length $\Delta_{i}$.

We shall define squares of side length $\Delta_i$ in our $(i)$-tiling of $\ZZ^2$ to be either \textit{$(i)$-good} or \textit{$(i)$-bad}. A $\Delta_1$-square is $(1)$-good if all its sites are initially healthy, otherwise it is $(1)$-bad. For $i \geq 1$ we declare a square $S$ of side length $\Delta_{i+1}$ to be $(i+1)$-bad if there exist two distinct non-adjacent squares (we consider squares that only touch corners as adjacent) $S', S''$ of side length $\Delta_{i}$ in our $(i)$-tiling (where $S'$ and $S''$ might be disjoint from $S$) which are $(i)$-bad and such that $\max \{ \dist(S,S') , \dist(S,S''), \dist(S',S'') \} \leq g_i$.

For $i \geq 1$ and an $(i)$-good square $S$ we say that a site $v \in S$ is \textit{$(i)$-clean} if, for all $j < i$, $v$ is at distance at least $g_j/3$ from any $(j)$-bad square.



\section{Barriers and triangular covers}
\label{sec:covers}

In this section we define barriers and triangular covers. We shall use these concepts in our proof to show that for $p > 0$ small enough the infection does not spread through the whole $\ZZ^2$, by showing that the closure of the initial infection can be enclosed in a collection of separated, finite sets of a special triangular shape.

Recall that we assume that for our update family $\UU$ we have
\begin{equation}
\label{eq:SS}
 \bigcup_{t=1}^3 \{u: |\theta(u_t)-\theta(u)| < \varepsilon \} \subset \SS.
\end{equation}
If for some $t \in \{1,2,3\}$ we have
\[
 \left | \big (\theta(u_{x,y}) - \theta(u_t) \big ) (\text{mod } 2\pi) - \pi/2 \right | < \sigma_1,
\]
(roughly speaking, if $u_{x,y}$ is ``nearly'' perpendicular to the stable direction $u_t$), then a \textit{$(1,t)$-barrier joining $x$ to $y$} is the set of all sites $v \in \ZZ^2$ such that for some $\lambda \in [0,1]$ we have
\[
\dist (v, \lambda x + (1-\lambda) y) \leq \rr(\UU).
\]
Let $i \geq 2$ and $x, y \in \ZZ$ be such that
\[
\left | \big (\theta(u_{x,y}) - \theta(u_t) \big ) (\text{mod } 2\pi) - \pi/2 \right | < \sigma_i.
\]
Let, for some $m \geq 1$, the sequence $(z_j)_{j=0}^m$ with $z_0 = x$, $z_m = y$ and $z_j \in \ZZ^2$ for all $j=1,2, \ldots, m-1$, be such that for all $j=1,2, \ldots, m$ we have
\[
\left | \left (\theta(u_{z_{j-1},z_j}) - \theta(u_t) \right ) (\text{mod } 2\pi) - \pi/2 \right | < \sigma_{i-1}.
\]
Then the set of all sites $v \in \ZZ^2$ such that for some $j \in \{1,2, \ldots, m\}$ and some $\lambda \in [0,1]$ we have
\[
 \dist (v, \lambda z_{j-1} + (1-\lambda) z_j) \leq \rr(\UU)
\]
is an \textit{$(i,t)$-barrier joining $x$ to $y$} (see Figure \ref{fig:(i,t)-barrier}). The sequence $(z_j)_{j=0}^m$ is called the \emph{anchor} of the $(i,t)$-barrier. Note that for $i \geq 2$, an $(i,t)$-barrier consists of $m \geq 1$ segments each of which is itself an $(i-1,t)$-barrier. This compound structure will allow $(i,t)$-barriers to avoid infected regions in $\ZZ^2$. We shall later use such infection-avoiding barriers and, exploiting the fact that they are essentially perpendicular to stable directions, enclose infected regions in hulls from which they cannot break out.
\begin{figure}[ht]
  \centering
  \begin{tikzpicture}[scale=\barrierscale]

	\draw [lightgray, fill=lightgray] (116.565:2) arc (116.565:296.565:2) -- ++(2,1) arc (-63.435:116.565:2) -- cycle;
	\draw [lightgray, fill=lightgray, xshift=2cm, yshift=1cm] (135:2) arc (135:315:2) -- ++(1,1) arc (-45:135:2) -- cycle;
	\draw [lightgray, fill=lightgray, xshift=3cm, yshift=2cm] (153.435:2) arc (153.435:333.435:2) -- ++(1,2) arc (-26.565:153.435:2) -- cycle;
	\draw [lightgray, fill=lightgray, xshift=4cm, yshift=4cm] (116.565:2) arc (116.565:296.565:2) -- ++(2,1) arc (-63.435:116.565:2) -- cycle;
	\draw [lightgray, fill=lightgray, xshift=6cm, yshift=5cm] (153.435:2) arc (153.435:333.435:2) -- ++(1,2) arc (-26.565:153.435:2) -- cycle;
	\foreach \x in {-3,-2,...,10}{
		\foreach \y in {-3,-2,...,10}{
			\node[draw,circle,inner sep=0.5pt,fill] at (\x,\y) {};
		}
	}
	\draw [->, thick] (0,7) node [below] {$u_t$} -- (0.7071,6.2929);
	\tikzstyle{every node}=[draw,circle,inner sep=1pt,fill]
	\draw (0,0) [thick] node {} -- (2,1) node {} -- (3,2) node {} -- (4,4) node {} -- (6,5) node {} -- (7,7) node {} ;
	\tikzstyle{every node}=[]
	\draw (0,0) node [below] {$z_0=x$} -- (2,1) node [below] {$z_1$} -- (3,2) node [right] {$z_2$} -- (4,4) node [above] {$z_3$} -- (6,5) node [below] {$z_4$} -- (7,7) node [above] {$z_5=y$};

  \end{tikzpicture}
  \caption{An example of an $(i,t)$-barrier joining $x$ to $y$ with $\rr(\UU) = 2$.}
  \label{fig:(i,t)-barrier}
\end{figure}

We may assume that $0 \leq \theta(u_1) < \theta(u_2) < \theta(u_3) < 2\pi$. Let $K \subset \ZZ^2$ be finite, let $i\geq 1$, and suppose $x, y, z \in \ZZ^2$ are distinct points such that:
\begin{itemize}
\item an $(i,1)$-barrier joining $x$ to $y$, an $(i,2)$-barrier joining $y$ to $z$ and an $(i,3)$-barrier joining $z$ to $x$ exist, and
\item $K$ lies inside the area bounded by these barriers and is disjoint from them.
\end{itemize}
Then we call the union $B$ of the three barriers an \emph{$(i)$-barrier cover} of $K$, and we call the union $T$ of $B$ and the sites in the area bounded by $B$ an \emph{$(i)$-triangular cover} of $K$. Note that there exist infinitely many $(i)$-barrier covers and infinite many $(i)$-triangular covers of any given finite set $K$ for every $i\geq 1$.

The $(i,t)$-barriers are perpendicular to strongly stable directions. In the next lemma we use this fact to show that for any finite set $K$, any $i \geq 1$, and any $(i)$-barrier cover $B$ and associated $(i)$-triangular cover $T$ of $K$, the closure $\closure{K}$ is a subset of $T \setminus B$ and is therefore isolated from $\ZZ^2 \setminus T$ by a barrier of thickness at least $\rr(\UU)$.

Note that for any subcritical update family $\UU=\{X_1,\dots,X_m\}$, for all $1 \leq i \leq m$ we have $|X_i| \geq 2$. Indeed if, without loss of generality, $X_1 = \{(x,y)\}$, then every direction $u \in S^1$ such that $\langle (x,y),u\rangle<0$ is an unstable direction. This set of directions constitutes an open semicircle in $S^1$ and hence the family $\UU$ is supercritical. Also, we then trivially have $p_c(\ZZ^2,\UU) = 0$: every site $(u,v) \in \ZZ^2$ will become infected if for some $t \geq 1$ the site $(u,v) + t \cdot (x,y)$ is initially infected and this happens almost surely for any $p > 0$.

\begin{lemma}
\label{lem:bordersSurvive}
Let $K\subset\ZZ^2$ be finite, let $i \geq 1$, and let $B$ be an $(i)$-barrier cover of $K$ and $T$ its associated $(i)$-triangular cover. Then
\[
\closure{K} \subset \closure{T \setminus B} = T \setminus B.
\]
\end{lemma}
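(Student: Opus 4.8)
The statement has two parts: first, that $[K] \subset T \setminus B$, and second, that $T \setminus B$ is closed under $\UU$; since $[T\setminus B] \supseteq T\setminus B$ always and $K \subset T\setminus B$ implies $[K]\subseteq[T\setminus B]$, the two parts together give the chain of inclusions. The plan is to prove the second part — that $T\setminus B$ is closed — and then observe that this automatically yields the first, because $K\subset T\setminus B$ by hypothesis (the barriers are disjoint from $K$ and $K$ lies in the bounded region), hence $[K]\subset[T\setminus B]=T\setminus B$.

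So the real work is to show $[T\setminus B]=T\setminus B$, i.e.\ that no site of $\ZZ^2\setminus(T\setminus B)$ — which is the set $B\cup(\ZZ^2\setminus T)$, the union of the barrier and the unbounded exterior — ever becomes infected when the initial set is exactly $T\setminus B$. The key geometric fact is that each $(i,t)$-barrier is a thickened polygonal path whose every segment makes an angle within $\sigma_i$ of being perpendicular to the stable direction $u_t$, and each segment is itself thickened by radius $\rr(\UU)$. I would argue by a "sweeping half-plane" comparison: fix a site $v\notin T\setminus B$ and suppose for contradiction it is the first site outside $T\setminus B$ to be infected, at some time $t+1$, via an update rule $X_j+v\subset A_t\subseteq (T\setminus B)\cup\{$already-infected exterior sites$\}$; but since $v$ is first, $X_j+v\subset T\setminus B$. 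I would then locate $v$ relative to the nearest barrier segment: because $v$ lies on the far side of (or within) a barrier of thickness $\rr(\UU)$ that is nearly perpendicular to a stable direction $u_t$, one can place a translated half-plane $\HH_{u_t,a}$ (or one of its near-perpendicular perturbations, which remains stable since $N_\varepsilon(u_t)\subset\SS$ and $\sigma_i$ is chosen comfortably below $\varepsilon$) so that $v\notin\HH_{u_t,a}$ but $X_j+v\subset\HH_{u_t,a}$ — the point being that the update rule $X_j$ has diameter at most $\rr(\UU)$, so it cannot "reach across" a barrier of that thickness to pull in a site on the other side. This contradicts stability of $u_t$, since $[\HH_{u_t,a}]=\HH_{u_t,a}$ forbids $v$ from being infected from within that half-plane. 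The three barriers, perpendicular to $u_1,u_2,u_3$ whose positive combination is $0$, together seal off the triangular region from all directions; for the exterior one uses whichever of the three barriers shields $v$, and for $v\in B$ itself one uses the same argument with the half-plane placed tangent to the relevant segment.

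The induction on $i$ enters because an $(i,t)$-barrier is a union of $(i-1,t)$-barriers along a polygonal anchor, so "nearly perpendicular to $u_t$" is only guaranteed segment-by-segment with the weaker tolerance $\sigma_{i-1}$ at the joints; one must check that the thickened polygonal path still admits, near every one of its sites, a supporting stable half-plane, and that the bends (controlled by $\sigma_{i-1}<\sigma_i<\dots$, all bounded by $\varepsilon$) never open a gap narrower than $\rr(\UU)$ through which an update rule could leak. This is where the condition $u_i\notin F(\UU)$ and \eqref{eq:avoidForbidden} are used: because no $u_i$ is perpendicular to a side of any $\conv(X_j)$, the half-plane $\HH_{u,a}$ for $u$ close to $u_t$ either contains all of $X_j+v$ or misses $v$ strictly, with no degenerate boundary case, so the "reach-across" dichotomy above is clean.

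\textbf{Main obstacle.} The delicate point is the treatment of the vertices $z_j$ of the anchor and, more generally, the corners where two of the three barriers meet near $x$, $y$, $z$: there the barrier is no longer well-approximated by a single half-plane, and one must verify that the thickening by $\rr(\UU)$, together with the smallness of all the angular tolerances $\sigma_i$ relative to $\varepsilon$ (and of $\varepsilon$ relative to the forbidden-direction gaps), still prevents any update rule from straddling the barrier. I expect that handling these joints — showing a supporting stable half-plane exists at every site of $B$, not just along the straight portions — is the step requiring genuine care; the interior and far-exterior cases are comparatively routine applications of stability of a single half-plane.
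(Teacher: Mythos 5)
Your proposal is essentially the same as the paper's: both reduce to showing that $T\setminus B$ is closed under $\UU$, and both exclude infection of any $v\notin T\setminus B$ by invoking the stability of directions in $N_\varepsilon(u_t)$ together with the avoidance of forbidden directions $F(\UU)$. The paper presents the exclusion as a three-way case split on the geometry of the offending update rule (a rule destabilizing a direction near $u_t$; a rule destabilizing a direction not near $u_t$; a rule with $0$ in its convex hull), whereas you phrase it uniformly via a supporting half-plane $\HH_{u,a}$ with $X_j+v\subset\HH_{u,a}$, $v\notin\HH_{u,a}$, which immediately yields $X_j\subset\HH_u$ and hence a destabilized direction close to $u_t$; these are two presentations of the same underlying comparison. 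You correctly flag the genuinely delicate points (the anchor vertices and the three corners where barriers meet, and the role of $F(\UU)$ in ruling out degenerate tangencies), which the paper likewise handles informally. One small caveat: the intuition that ``$X_j$ has diameter at most $\rr(\UU)$, so it cannot reach across a barrier of that thickness'' is slightly imprecise — $\rr(\UU)$ bounds the diameter of a rule, not its distance from the origin — but the formal half-plane computation you sketch does not rely on it and goes through as in the paper.
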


\begin{proof}
The first containment $\closure{K} \subset \closure{T \setminus B}$ is obvious because $K \subset T\setminus B$. Therefore we only need to prove that $\closure{T\setminus B} = T\setminus B$, i.e., that $T \setminus B$ is closed under $\UU$.

Assume that the initial set of infected sites is $T \setminus B$. Recall that we have $u_1$, $u_2$, $u_3$ and $\varepsilon \leq \varepsilon(u_1, u_2, u_3)$ in Section~\ref{sec:chooseDirections} such that for $t \in \{1,2,3\}$ we have $N_{\varepsilon(u_1, u_2, u_3)}(u_t) \subset \SSS \setminus F(\UU)$. 

A site $v \in \ZZ^2 \setminus (T \setminus B)$ can become infected for three, essentially different, reasons. These are schematically shown in Figure \ref{fig:coversStable}, where we assume that $T \setminus B$ lies below the solid curve. Cases (1) and (2) in Figure \ref{fig:coversStable} correspond to $v$ being infected using update rules $X'$ and $X''$, which destabilize directions $u'$ and $u''$ respectively. For simplicity we assume $|X'|=|X''|=2$. Case (3) corresponds to $v$ being infected using an update rule $X'''$ that does not destabilize any directions. Rules of this type necessarily contain the origin in their (closed) convex hull; in the figure, for simplicity, we assume $|X'''|=3$.

\begin{figure}[ht]
  \centering
  \begin{tikzpicture}[scale=\barrierscale]

    \foreach \y in {0, 4, 8} {
      \draw[] (0,\y) to [out=35,in=140] (5,\y) to [out=320,in=205] (10,\y) to [out=25,in=140] (15,\y);
    }


    \draw [help lines, dashed] (3,9.68) -- (7,6.31);
    \draw (4,7.5) node {$\times$};
    \draw (3.5,8.5)  node {$\times$};
    \draw (7.5,9) [above] node {$v$};
    \node[draw,circle,inner sep=0,minimum size=0.1cm,fill] at (7.5,9) {};
    \draw [->, thick] (5.9749,7.181) -- (7.5,9) node [black,midway,right] {$u'$};;
    \draw (7.5,5.5) [above] node {(1)};


    \draw [help lines, dashed] (4,4.3) -- (10.5,3.8);
    \draw (4,4.3) node {$\times$};
    \draw (10.5,3.8)  node {$\times$};
    \draw (7,3.4) [left] node {$v$};
    \node[draw,circle,inner sep=0,minimum size=0.1cm,fill] at (7,3.4) {};
    \draw [->, thick] (7.0497,4.0459) -- (7,3.45) node [black,midway,right] {$u''$};;
    \draw (7.5,1.5) [above] node {(2)};


    \draw [help lines, dashed] (4,0.3) -- (10.5,-0.2) -- (6.5,-1.25) -- cycle;
    \draw (4,0.3) node {$\times$};
    \draw (10.5,-0.2)  node {$\times$};
    \draw (6.5,-1.25)  node {$\times$};
    \draw (7,-0.5) [left] node {$v$};
    \node[draw,circle,inner sep=0,minimum size=0.1cm,fill] at (7,-0.5) {};
    \draw (7.5,-2.5) [above] node {(3)};
  \end{tikzpicture}
  \caption{Three ways to infect a site $v \in \ZZ^2 \setminus (T \setminus B)$. The sites in $v+X'$, $v+X''$ and $v+X'''$ are denoted by $\times$.}
  \label{fig:coversStable}
\end{figure}
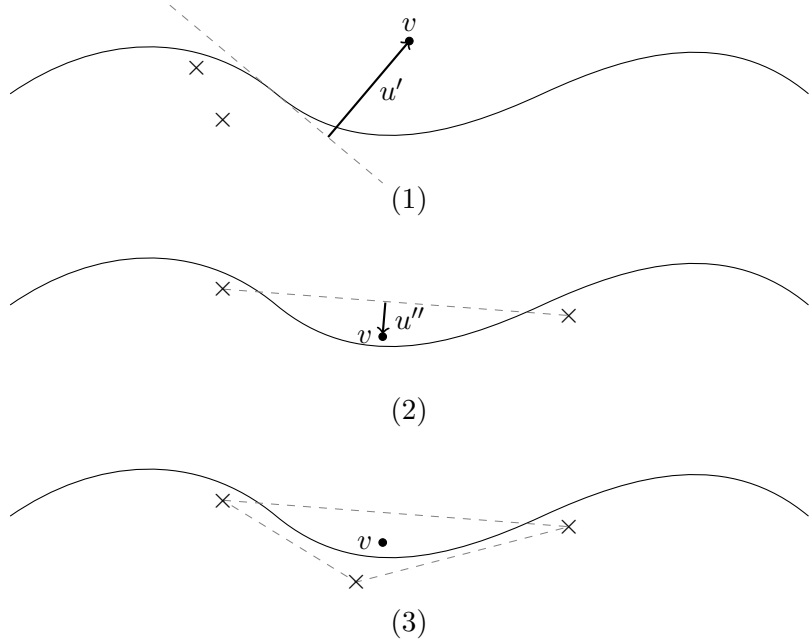

The site $v$ cannot be infected for the reason shown in case (1) of Figure \ref{fig:coversStable}, because the existence of such a rule $X' \in \UU$ would contradict the fact that for $t \in \{1,2,3\}$ we have $N_{\varepsilon(u_1, u_2, u_3)}(u_t) \subset \SS$. It cannot be infected for the reason shown in cases (2) or (3) of Figure \ref{fig:coversStable}, because now the existence of such a rule would contradict the fact that $N_{\varepsilon(u_1, u_2, u_3)}(u_t) \cap F(\UU) = \emptyset$. Hence $T \setminus B$ is closed under $\UU$, which completes the proof.
\end{proof}




In the next lemma we show that there exists a constant $c = c(\UU)$ such that for all $i \geq 1$ and all sufficiently large $\Delta$, we can find an $(i)$-triangular cover of a square of side length $\Delta$ in a ``small'' neighbourhood of that square, i.e., in a larger square of side length at most $c\Delta$.

\begin{lemma}
\label{lem:coverExists}
There exists $\ell_0\in\NN$ and $\varepsilon_0>0$ depending only on $\UU$ such that the following hold. Let $\varepsilon\leq\varepsilon_0$, $\ell\geq\ell_0$, $i\geq 1$, and $\Delta\geq\rr(\UU)$. Consider the tiling of $[c \Delta]^2$ consisting of $(2\ell+1)^2$ squares of side length $\Delta$, where $c=2\ell+1$. Then this tiling contains three distinct $\Delta \times \Delta$ squares $Y_1$, $Y_2$ and $Y_3$ such that for all $y_1 \in Y_1$, $y_2 \in Y_2$ and $y_3 \in Y_3$, and for each $t=1,2,3$, we have
\begin{equation}\label{eq:lemslope}
\left | \big (\theta(u_{y_t,y_{t+1}}) - \theta(u_t) \big ) (\operatorname{mod } 2\pi) - \pi/2 \right | < \varepsilon/2,
\end{equation}
where $y_4 = y_1$.

Additionally, every $(i,1)$-barrier joining $y_1$ to $y_2$, every $(i,2)$-barrier joining $y_2$ to $y_3$ and every $(i,3)$-barrier joining $y_3$ to $y_1$, is contained within the tiling and is disjoint from its middle square, i.e., from
\[
Y_0 = [\Delta \ell+1,\Delta(\ell+1)] \times [\Delta \ell+1,\Delta(\ell+1)].
\]
\end{lemma}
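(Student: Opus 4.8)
The plan is to prove Lemma~\ref{lem:coverExists} by a geometric argument in two stages: first locating the squares $Y_1$, $Y_2$, $Y_3$ whose mutual directions are essentially perpendicular to $u_1$, $u_2$, $u_3$, and then bounding the spatial extent of any $(i,t)$-barrier between consecutive squares. For the first stage, recall from \eqref{eq:vecsum} that $\lambda_1 u_1 + \lambda_2 u_2 + \lambda_3 u_3 = 0$ with all $\lambda_t > 0$; equivalently, rotating each $u_t$ by $\pi/2$ to a vector $w_t := u(\theta(u_t)+\pi/2)$, the $w_t$ are the directed edges of a (non-degenerate) triangle, since $\sum \lambda_t w_t = 0$ as well. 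So there is a genuine triangle in $\RR^2$ with vertices $p_1, p_2, p_3$ such that $p_{t+1} - p_t$ is a positive multiple of $w_t$ for $t = 1,2,3$ (indices mod $3$). Scaling this triangle up by a large factor and translating it into $[c\Delta]^2$, I would take $Y_t$ to be the $\Delta$-square of the tiling containing (the rounded) $p_t$. The point is that as the triangle is scaled up, the angular error incurred by moving from the ideal vertex $p_t$ to an arbitrary site $y_t \in Y_t$ (a perturbation of size $O(\Delta)$ against edges of length $\Theta(\ell\Delta)$) is $O(1/\ell)$, hence $< \varepsilon/2$ once $\ell \geq \ell_0(\varepsilon)$; this gives \eqref{eq:lemslope}. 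One must also keep the three squares distinct and keep the middle square $Y_0$ strictly inside the triangle — both are arranged by choosing the scaled triangle to be ``fat'' (bounded aspect ratio, depending only on the $\lambda_t$, i.e. only on $\UU$) and centered so that $Y_0$ sits near its centroid with room to spare.

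For the second stage I need to control where an $(i,t)$-barrier joining $y_t$ to $y_{t+1}$ can go. By the recursive definition in Section~\ref{sec:covers}, such a barrier has an anchor $(z_j)_{j=0}^m$ with $z_0 = y_t$, $z_m = y_{t+1}$, and each $u_{z_{j-1},z_j}$ within $\sigma_{i-1} \leq \sigma_1 = \varepsilon/2 + \varepsilon\Delta_1^{-\gamma} \leq \varepsilon$ of perpendicularity to $u_t$, and the barrier itself lies within $\rr(\UU)$ of the union of the segments $[z_{j-1}, z_j]$. The crucial geometric fact is that a polygonal path all of whose edges point in directions within $\varepsilon$ of a fixed direction $w_t$ stays within a thin slab: writing $v_t := u_t$ for the normal direction, every edge has $|\langle z_j - z_{j-1}, v_t\rangle| \leq \sin(\varepsilon) \cdot |z_j - z_{j-1}|$, and since all the $\langle z_j - z_{j-1}, w_t\rangle$ have the same sign (they are all close to $+|z_j-z_{j-1}|$ in the $w_t$-component, say), the total path length $\sum |z_j - z_{j-1}|$ is at most $\frac{1}{\cos\varepsilon}|\langle y_{t+1} - y_t, w_t\rangle| \leq \frac{1}{\cos\varepsilon}\dist(y_t,y_{t+1})$. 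Consequently the entire anchor path, and hence the barrier, is contained in the $O(\varepsilon \cdot \dist(y_t,y_{t+1}) + \rr(\UU))$-neighbourhood of the straight segment $[y_t, y_{t+1}]$. Since $\dist(y_t, y_{t+1}) = O(\ell\Delta)$ and $\varepsilon$ is small, this neighbourhood is a slab of half-width $O(\varepsilon\ell\Delta)$ about $[y_t,y_{t+1}]$.

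It remains to check that these three slabs, one about each edge of the triangle $Y_1Y_2Y_3$, lie inside the tiling $[c\Delta]^2$ and miss $Y_0$. For containment in $[c\Delta]^2$: the ideal triangle was chosen well inside $[c\Delta]^2$ with a margin of $\Theta(\ell\Delta)$ to the boundary, so a slab of half-width $O(\varepsilon\ell\Delta)$ stays inside once $\varepsilon \leq \varepsilon_0$. For disjointness from $Y_0$: the middle square $Y_0$ has side $\Delta$ and was placed so that its distance to each edge $[p_t, p_{t+1}]$ of the ideal triangle is $\Theta(\ell\Delta)$ (again because the triangle is fat and $Y_0$ is near its centroid); passing from $[p_t,p_{t+1}]$ to $[y_t,y_{t+1}]$ moves the segment by only $O(\Delta)$, and then the slab widens it by only $O(\varepsilon\ell\Delta)$, so the distance from $Y_0$ to each slab remains positive for $\varepsilon \leq \varepsilon_0$ and $\ell \geq \ell_0$. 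This establishes both bulleted conclusions. I expect the main obstacle to be bookkeeping: choosing the scaled triangle and the integer tiling coordinates so that all three requirements — distinctness of the $Y_t$, the angular bound \eqref{eq:lemslope} uniformly over $y_t \in Y_t$, and the simultaneous containment/avoidance of all three barrier-slabs — hold with a single pair of thresholds $(\ell_0, \varepsilon_0)$ depending only on $\UU$ (through the $\lambda_t$, the $u_t$, and $\rr(\UU)$), and in particular \emph{independent of $i$}, which is automatic here since the bound on a barrier's extent in terms of its endpoints does not see the scale index $i$ beyond the crude estimate $\sigma_{i-1} \leq \varepsilon$.
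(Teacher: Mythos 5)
Your proof is correct and follows essentially the same route as the paper's: you place $Y_1,Y_2,Y_3$ at the vertices of a large triangle circumscribing $Y_0$ whose sides are perpendicular to $u_1,u_2,u_3$ (the paper realizes this via two concentric triangles circumscribed about circles centred at the midpoint of $Y_0$, with tangent points at $v + r\Delta u_t$ and $v + (r+3)\Delta u_t$, which is the same triangle you build from $\sum\lambda_t u(\theta(u_t)+\pi/2)=0$), and you confine the barriers by the same key observation that an anchor whose segments are all within $\sigma_{i-1}<\varepsilon$ of perpendicular to $u_t$ lies in a thin rhombus/slab about the chord $[y_t,y_{t+1}]$. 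The only cosmetic difference is that the paper quantifies the argument via $a_{\min}\tan(\varepsilon/2)\geq 3\Delta$ and $a_{\max}\tan\varepsilon\leq r\Delta/2$ whereas you phrase it with $O(\cdot)$ bounds, and both write-ups share the same small imprecision of letting $\ell_0$ implicitly depend on $\varepsilon$ (harmless, since in applications $\varepsilon$ is fixed as $\min\{\varepsilon_0,\varepsilon(u_1,u_2,u_3)\}$).
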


\begin{proof}
Let $i \geq 1$ and $\Delta\geq\rr(\UU)$, and let $\ell \geq 1$ be sufficiently large. Let
\[
v = (\Delta \ell+(\Delta+1)/2, \Delta \ell+(\Delta+1)/2)
\]
 be the midpoint of $Y_0$. The whole of $Y_0$ is clearly contained in a circle of radius $\Delta$ centered at $v$. For $r > 1$ to be specified later, let $S_1$ and $S_2$ be the circles centered at $v$ of radius $r \Delta$ and $(r+3) \Delta$ respectively. Also, let $T_1$ and $T_2$ be the triangles circumscribed on $S_1$ and $S_2$ respectively, tangent to these circles, for $t=1,2,3$, at points $v + r \Delta u_t$ and $v + (r+3) \Delta u_t$ respectively. (See Figure \ref{fig:coverExists}.) Independently of the values of $u_j$ and $r$, the three grey corner regions in Figure \ref{fig:coverExists} are each large enough to contain a disc of diameter $3 \Delta$, each of which itself contains a $\Delta \times \Delta$ square of the tiling of $[c \Delta]^2$. Fix any such three squares $Y_1$, $Y_2$ and $Y_3$. We claim that if $r$ is large enough and $\varepsilon > 0$ is small enough (both independently of $i$) then $Y_1$, $Y_2$ and $Y_3$ satisfy the conclusions of the lemma. 

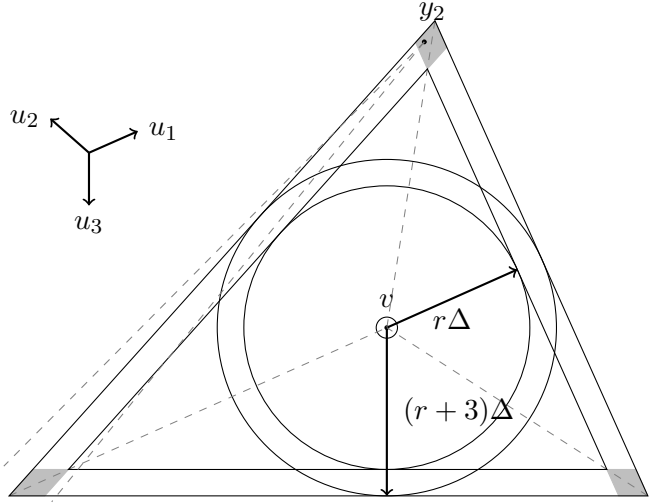
\begin{figure}[ht]
  \centering
  \begin{tikzpicture}[scale=\barrierscale]
		\draw [->, thick] (1.5,6.5) -- (2.4185,6.9082) node [right] {$u_1$};
		\draw [->, thick] (1.5,6.5) -- (0.7724,7.1468) node [left] {$u_2$};
		\draw [->, thick] (1.5,6.5) -- (1.5,5.5) node [below] {$u_3$};
		\draw (7.0964,3.1867) circle (3.1867);
		\draw (7.0964,3.1867) circle (2.6867);
		\draw (7.0964,3.1867) circle (0.2);
		\draw [->, thick] (7.0964,3.1867) -- (7.0964,0) node [midway,right=2pt] {$(r+3) \Delta$};
		\draw [->, thick] (7.0964,3.1867) -- (9.5473,4.2873) node [midway,below] {$r \Delta$};
		\draw (7.0964,3.1867) node [above=4pt] {$v$};
		\node [draw,circle,inner sep=0,minimum size=0.05cm,fill] at (7.0964,3.1867) {};
		\draw [fill,lightgray] (0,0) -- (0.4444,0.5) -- (1.1135,0.5) -- (0.669,0) -- cycle;
		\draw [fill,lightgray] (12,0) -- (11.4528,0) -- (11.2306,0.5) -- (11.7778,0.5) -- cycle;
		\draw [fill,lightgray] (8,9) -- (7.6352,8.5896) -- (7.8582,8.0879) -- (8.223,8.4982) -- cycle;
		\draw [help lines,dashed,gray] (0,0) -- (7.0964,3.1867);
		\draw [help lines,dashed,gray] (12,0) -- (7.0964,3.1867);
		\draw [help lines,dashed,gray] (8,9) -- (7.0964,3.1867);
		\draw (0,0) -- (12,0) -- (8,9) -- cycle;
		\draw (1.1135,0.5) -- (11.2306,0.5) -- (7.8582,8.0879) -- cycle;
		\draw (7.95,8.6) node [above=4pt] {$y_2$};
		\node [draw,circle,inner sep=0,minimum size=0.05cm,fill] at (7.8,8.6) {};
		\draw [dashed, gray] (7.8,8.6) -- (-0.1,0.5629);
		\draw [dashed, gray] (7.8,8.6) -- (0.8,-0.1129);
  \end{tikzpicture}
  \caption{Finding $(i,t)$-barriers in the neighbourhood of $v$.}
  \label{fig:coverExists}
\end{figure}

For $t=1,2,3$, let $\theta_t = \theta(u_t)$. Without loss of generality we may assume that, modulo $2\pi$, we have $\theta_3 - \theta_2 \geq \theta_2 - \theta_1 \geq \theta_1 - \theta_3$, and we may also assume that $r \geq 3$. The longest side of $T_2$ has length
\[
\begin{split}
 a_{max} & = (r+3) \Delta \left ( \tan \left ( \frac{\theta_3 - \theta_2}{2} \right ) +  \tan \left ( \frac{\theta_2 - \theta_1}{2} \right ) \right ) \\
	 & \leq 2r \Delta \left ( \tan \left ( \frac{\theta_3 - \theta_2}{2} \right ) +  \tan \left ( \frac{\theta_2 - \theta_1}{2} \right ) \right ),
\end{split}
\]
while the shortest side of $T_1$ has length 
\[
 a_{min} = r \Delta \left ( \tan \left ( \frac{\theta_2 - \theta_1}{2} \right ) +  \tan \left ( \frac{\theta_1 - \theta_3}{2} \right ) \right ).
\]

First we verify that \eqref{eq:lemslope} holds. Let $y_1 \in Y_1$, $y_2 \in Y_2$ and $y_3 \in Y_3$. For $t=1,2,3$, we must show that $\theta(u_{y_t,y_{t+1}})$ is at most $\varepsilon/2$ away from the angle of the vector perpendicular to $u_t$ (where again $y_4=y_1$). This holds if 
\begin{equation}
 \label{eq:a_min}
 a_{min} \tan \left ( \frac{\varepsilon}{2} \right ) \geq 3 \Delta,
\end{equation}
because this condition guarantees that the whole grey corner region containing $Y_{t+1}$ is contained inside the angle with its vertex at $y_t$ and of measure $\varepsilon$, lying symmetrically around the line perpendicular to $u_t$ which goes through $y_t$. (See Figure \ref{fig:coverExists} with $t=2$.) Inequality \eqref{eq:a_min} is satisfied whenever
\[
 r \geq 3 \left( \tan \left ( \frac{\varepsilon}{2} \right ) \right )^{-1} \Bigg ( \tan \left ( \frac{\theta_2 - \theta_1}{2} \right ) +  \tan \left ( \frac{\theta_1 - \theta_3}{2} \right ) \Bigg )^{-1} = r_\varepsilon.
\]
Thus, \eqref{eq:lemslope} holds provided $r\geq r_\varepsilon$.

Finally we must show that the condition in the last paragraph of the lemma holds. Given any two sites $u$ and $w$ in $\ZZ^2$, and $t \in \{1,2,3\}$, the sequence $(z_j)_{j=0}^m$ of points forming the anchor of an $(i,t)$-barrier joining $u$ to $w$ is, by the definition of an $(i,t)$-barrier, contained in a rhombus with two of its vertices at $u$ and $w$ and the interior angles at these two vertices equal to $2\varepsilon$. Now, if $u$ and $w$ are contained in different grey corner regions in Figure \ref{fig:coverExists}, then one can easily verify that this rhombus is at distance at least $r\Delta/6$ from the circle of radius $\Delta$ centered at $v$, provided $a_{max} \tan \varepsilon \leq r \Delta/2$, which holds if
\[
\begin{split}
\varepsilon \leq \arctan \left (\frac{1}{4} \Bigg (\tan \left ( \frac{\theta_3 - \theta_2}{2} \right ) +  \tan \left ( \frac{\theta_2 - \theta_1}{2} \right ) \Bigg )^{-1} \right ) = \varepsilon_0.
\end{split}
\]
Note that $\varepsilon_0$ depends on the values of $\theta_t$ only. Thus, to ensure that every $(i,t)$-barrier joining $u$ and $w$ is disjoint from the small circle centered at $v$, it is enough to have $r \Delta /6 \geq \rr(\UU)$, which is true whenever $r \geq 6$ (recall that we assume $\Delta \geq \rr(\UU)$).

The assertion that the barriers are entirely contained within $[c \Delta]^2$ if $c$ is sufficiently large follows immediately from the fact that by the choice of $\varepsilon$ every point of every $(i,t)$-barrier is at distance at most
\[
 a_{max} + r \Delta/2 + \rr(\UU) \leq \Delta \left (2r  \Bigg ( \tan \left ( \frac{\theta_3 - \theta_2}{2} \right ) +  \tan \left ( \frac{\theta_2 - \theta_1}{2} \right ) \Bigg ) +\frac{r}{2} + 1 \right )
\]
from $v$. Therefore, for $\varepsilon \leq \varepsilon_0$ and $r = \max \{ 6, r_\varepsilon \}$ the lemma holds with
\[
 \ell_0 = \left \lceil 2r  \Bigg ( \tan \left ( \frac{\theta_3 - \theta_2}{2} \right ) +  \tan \left ( \frac{\theta_2 - \theta_1}{2} \right ) \Bigg ) +\frac{r}{2} + 1 \right \rceil. \qedhere
\]
\end{proof}

Given a set of stable directions $\SS$ and our choice of strongly stable and not forbidden directions $u_1$, $u_2$ and $u_3$ in Section \ref{sec:chooseDirections}, let $c(\SS)$ be the smallest $c=2\ell+1$ for which Lemma \ref{lem:coverExists} holds for $\varepsilon = \min \{\varepsilon_0, \varepsilon(u_1, u_2, u_3)\}$.

Now let $K\subset\ZZ^2$ be finite and let $\Delta>0$ be minimal such that $K$ is contained in a square of side length $\Delta$. We say that an $(i)$-triangular cover for a finite set $K$ is \emph{tight} if it is completely contained in the $\big(c(\SS)\Delta\big)\times\big(c(\SS)\Delta\big)$ square centered at any minimal square (necessarily of side length $\Delta$) containing $K$.


\section{Positive critical probability: The proof of Theorem~\ref{thm:subpc}}
\label{sec:proof}

The aim of the first part of this section is to state and prove the theorem that will be our main tool in proving Theorem~\ref{thm:subpc}. We described the outline of the proof of this theorem in Section \ref{sec:outline}. Before stating the theorem, we need a few preliminary definitions and remarks.

For each $k\geq 1$, we say that the measure $\PP_p$ is \emph{$(k,2)$-independent} if, for every pair of non-adjacent squares $S$ and $T$ of side length $\Delta_k$ in the $(k)$-tiling, the events
\[
\{S \text{ is } (k) \text{-good} \} \quad \text{and} \quad \{T \text{ is } (k) \text{-good}\}
\]
are independent.

Recall that, for $i\geq 1$, a site $v$ in an $(i)$-good square is said to be clean if, for all $j<i$, $v$ is at distance at least $g_j/3$ from any $(j)$-bad square. 

In the statement of the theorem we refer to unions of pairwise adjacent $(i)$-bad $\Delta_i$-squares. Note that at most four such squares can be all pairwise adjacent and that a union of such squares is always contained in a $2\Delta_i \times 2\Delta_i$ square.

After the initial infection is seeded, bootstrap percolation is a fully deterministic process. Hence, given a set of initially infected sites $A \subset \ZZ^2$, for each $k\geq 1$ let $\XX_k$ be the collection of all sets $X \subset \ZZ^2$ such that $X$ is a union of pairwise adjacent $(k)$-bad squares and $X$ intersects a $(k+1)$-good square. 

\begin{theorem}
\label{thm:coversExist}
 Let $\UU$ be a subcritical update family with three strongly stable directions $u_1, u_2, u_3 \in \SSS \setminus F(\UU)$ such that for some positive numbers $\lambda_1, \lambda_2, \lambda_3$ we have $\lambda_1 u_1 + \lambda_2 u_2 + \lambda_3 u_3 = 0$. Then, if $p>0$ is small enough, for each $k \geq 1$ the following three conditions hold:
\begin{enumerate}[leftmargin=0.3in]
 \item \label{cond:cond1} The measure $\PP_p$ is $(k,2)$-independent, and for any $\Delta_k \times \Delta_k$ square $S$ in the $(k)$-tiling we have
\[
\PP_p( S \text{ is } (k)\text{-bad}) \leq q_k.
\]
 \item \label{cond:cond2} Every $(k)$-good square $S$ contains a $(k)$-clean site.
 \item \label{cond:cond3} For every $X \in \XX_k$ there exists a tight $(k)$-triangular cover $T_{k}(X)$ such that, for distinct $Y, Z \in \XX_k$, the sets $T_{k}(Y)$ and $T_{k}(Z)$ are disjoint, and for each $i<k$, if $Y \in \XX_{k}$ and $Z \in \XX_{i}$, then either $T_{k}(Y)$ and $T_{i}(Z)$ are disjoint or $T_{i}(Z) \subset T_{k}(Y)$.
\end{enumerate}
\end{theorem}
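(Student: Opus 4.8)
The plan is to prove the three conditions simultaneously by induction on $k$, since they are intertwined: condition~\eqref{cond:cond3} at level $k$ relies on having good squares surrounded by good squares (so that barriers can be threaded through them), which in turn uses condition~\eqref{cond:cond2} at level $k$ and the existence of barriers at level $k-1$; meanwhile the probability estimate in~\eqref{cond:cond1} propagates upward through the recursive definition of bad squares. I would set up the induction so that the inductive hypothesis at level $k-1$ gives, in addition to the three stated conditions, the key auxiliary statement that one can always find a \emph{healthy $(k-1,t)$-barrier} through $(k-1)$-good squares joining any two suitably-positioned $(k-1)$-clean sites, avoiding all $(j)$-bad squares for $j<k-1$; this is Lemma~\ref{lem:barriersExist}, which the excerpt flags as the heart of the argument. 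So really the induction carries the pair (Theorem~\ref{thm:coversExist} at level $k$) and (Lemma~\ref{lem:barriersExist} at level $k$).

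For the base case $k=1$: a $\Delta_1$-square is $(1)$-good iff all its sites are healthy, so $\PP_p(S\text{ is }(1)\text{-bad})\leq \Delta_1^2 p$, and choosing $p$ small relative to $\Delta_1$ (and $\Delta_1\geq\rr(\UU)$) makes this at most $q_1=\Delta_1^{-\delta}$; $(1,2)$-independence is immediate since $(1)$-goodness of $S$ depends only on the sites of $S$. Condition~\eqref{cond:cond2} is vacuous at level $1$ (there are no $j<1$), every site of a $(1)$-good square is $(1)$-clean. For~\eqref{cond:cond3}, given $X\in\XX_1$ (a union of pairwise adjacent $(1)$-bad squares, contained in a $2\Delta_1\times2\Delta_1$ square and meeting a $(2)$-good square), I would invoke Lemma~\ref{lem:coverExists} with $i=1$ and $\Delta=2\Delta_1$ to produce squares $Y_1,Y_2,Y_3$ in the $c(\SS)$-neighbourhood whose barriers satisfy the slope condition~\eqref{eq:lemslope} and enclose $X$; since $X$ meets a $(2)$-good square, its whole $c(\SS)$-neighbourhood consists of $(1)$-good squares (by the definition of $(2)$-bad), so the $(1,t)$-barriers — which are just thickened straight segments — can be placed through healthy sites. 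Disjointness of covers for distinct $X,Y\in\XX_1$ needs that distinct such $X$ are far apart; this follows because if two $(1)$-bad squares were within $g_1$ of each other and both near a common square they would force $(2)$-badness — i.e. the $(2)$-goodness hypothesis built into $\XX_1$ forces the requisite isolation. The cross-level clause is vacuous.

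For the inductive step, assume everything at level $<k$. First~\eqref{cond:cond1}: a $\Delta_k$-square $S$ is $(k)$-bad only if it has two non-adjacent $(k-1)$-bad $\Delta_{k-1}$-squares within distance $g_{k-1}$ of it and of each other; using $(k-1,2)$-independence and the bound $\PP_p((k-1)\text{-bad})\leq q_{k-1}$, a union bound over the $O((g_{k-1}/\Delta_{k-1})^2)=O(\Delta_{k-1}^{2(\beta-1)})$ candidate pairs gives $\PP_p(S\text{ is }(k)\text{-bad})\leq \big(O(\Delta_{k-1}^{2\beta-2})q_{k-1}\big)^2$, and the arithmetic relation $\delta=(2\alpha+2\beta-3)/(2-\alpha)$ between the exponents is exactly what forces this to be $\leq q_k=\Delta_k^{-\delta}\approx\Delta_{k-1}^{-\alpha\delta}$; $(k,2)$-independence follows because $(k)$-goodness of a $\Delta_k$-square depends only on $(k-1)$-squares in a bounded ($g_{k-1}$-sized) neighbourhood, which for non-adjacent $\Delta_k$-squares are disjoint regions (here one needs $\Delta_k \gg g_{k-1}$, true since $\alpha>\beta$). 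Condition~\eqref{cond:cond2}: in a $(k)$-good square, the $(k-1)$-bad squares it contains are strongly isolated, so counting shows most of the square is at distance $\geq g_j/3$ from all $(j)$-bad squares for every $j<k$ simultaneously — a volume/measure estimate summing the forbidden neighbourhoods over scales $j<k$ and checking they do not cover the whole $\Delta_k$-square.

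The bulk of the work, and the main obstacle, is condition~\eqref{cond:cond3} at level $k$, together with proving Lemma~\ref{lem:barriersExist} at level $k$. Given $X\in\XX_k$, apply Lemma~\ref{lem:coverExists} with $\Delta=2\Delta_k$ to get target squares $Y_1,Y_2,Y_3$; because $X$ meets a $(k+1)$-good square, the surrounding $c(\SS)$-neighbourhood is all $(k)$-good, so inside each $Y_t$ we can pick a $(k)$-clean site $y_t$ by~\eqref{cond:cond2}. The task is then to build an $(k,t)$-barrier joining $y_t$ to $y_{t+1}$ through $(k)$-good squares avoiding all lower-level bad squares — this is where subcriticality is used and where Lemma~\ref{lem:barriersExist} does its job: one navigates a near-straight polygonal path $y_t=z_0,z_1,\dots,z_m=y_{t+1}$ with vertices at $(k-1)$-clean sites, each segment obeying the slacker slope bound $\sigma_{k-1}$, such that the union of straight tubes around the segments stays in $(k-1)$-good squares; the $(k-1)$-bad squares inside the ambient $(k)$-good squares are isolated enough (distance $g_{k-1}$ apart, each of size $\leq 2\Delta_{k-1}$) that one can detour around each with angular deviation summing to less than $\sigma_k-\sigma_{k-1}=\varepsilon\Delta_{k-1}^{-\gamma}(1-\Delta_{k-1}^{\gamma-\gamma\alpha})$-ish, which is precisely why $\sigma_i$ was defined with the $\Delta_i^{-\gamma}$ correction term — the geometric series of detour angles across all scales stays below $\varepsilon/2$. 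By induction each straight sub-tube contains a genuine healthy $(k-1,t)$-barrier, and their concatenation is the required $(k,t)$-barrier; Lemma~\ref{lem:bordersSurvive} then certifies the resulting triangular cover is closed under $\UU$. Finally, disjointness and nesting of covers across levels: distinct $Y,Z\in\XX_k$ are far apart (else they would merge into one $(k)$-bad region or violate $(k+1)$-goodness), so their tight covers, each confined to a $c(\SS)\cdot 2\Delta_k$ box, are disjoint; for $Z\in\XX_i$ with $i<k$, either $Z$'s box is far from $Y$'s enclosing region — and one arranges the barrier of $T_k(Y)$ to dodge the (finitely many, well-separated) lower-scale bad regions not contained in $X$, using cleanliness of the $z_j$ to keep distance $\geq g_i/3 > $ (width of $T_i(Z)$) — giving disjointness, or $Z\subset X$ forcing $T_i(Z)\subset T_k(Y)$ by construction. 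The delicate point throughout is bookkeeping the angular budget across infinitely many scales, i.e. verifying that the cumulative perturbation of the barriers stays within the $\varepsilon$-window of stable directions; this is the crux and the reason for the specific choices of $\gamma,\beta,\alpha,\delta$ and the $\sigma_i$.
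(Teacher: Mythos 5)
Your proposal tracks the paper's proof very closely: the same simultaneous induction on $k$, the same role for Lemma~\ref{lem:barriersExist} (established by a nested induction on its own scale parameter), the geometric input from Lemma~\ref{lem:coverExists}, the closure certificate from Lemma~\ref{lem:bordersSurvive}, and the same angular-budget argument exploiting the $\Delta_i^{-\gamma}$ correction in $\sigma_i$. Two small points where the paper does things a bit differently or where your arithmetic slips: in condition~(i) the count of bad pairs should be (positions of the first bad square) $\times$ (positions of the second near the first), i.e.\ $O\bigl((\Delta_k/\Delta_{k-1})^2\bigr)\cdot O\bigl((g_{k-1}/\Delta_{k-1})^2\bigr)\cdot q_{k-1}^2$, not $\bigl(O(\Delta_{k-1}^{2\beta-2})q_{k-1}\bigr)^2$; and for condition~(ii) the paper avoids a direct multi-scale measure estimate by first locating a $(k)$-good subsquare at distance in $[2g_k/5,3g_k/5]$ from the (at most one, up to adjacency) cluster of $(k)$-bad squares, and then taking the $(k)$-clean site that subsquare already contains by induction --- this sidesteps having to bound forbidden neighbourhoods at all scales $j<k$ simultaneously.
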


Since all $(k)$-triangular covers we consider henceforth will be tight, we shall always assume that this extra condition is understood, and make no further mention of it.

\begin{proof}
Given a choice of $\alpha, \beta, \gamma$ and $\delta$ satisfying \eqref{eq:parameters}, and $\varepsilon = \min \{\varepsilon_0, \varepsilon(u_1, u_2, u_3)\}$ where $\varepsilon_0$ is taken as in the proof of Lemma \ref{lem:coverExists} and $\varepsilon(u_1, u_2, u_3)$ as in Section \ref{sec:chooseDirections}, let $\Delta_1$ be large enough to satisfy the following five conditions:
\begin{itemize}
 \item $\Delta_1 \geq \max \{ 2^{\delta+5}, \rr(\UU) \},$
 \item $\Delta_1^{\alpha-1} \geq 12 c(\SS),$
 \item $\Delta_1^{\beta-1} \geq \max \{ 30, 3 c(\SS) \},$
 \item $\Delta_1^{\alpha-\beta} \geq 3,$
 \item $\Delta_1^{\beta-1-\gamma} \geq 68 c(\SS) / \varepsilon.$
\end{itemize}
Let sites in $\ZZ^2$ be initially infected independently with probability $p = (\Delta_1)^{-\delta-2}$. We shall prove Theorem \ref{thm:coversExist} by induction on $k \geq 1$. First we check the case $k=1$.
\begin{enumerate}[leftmargin=0.3in]
 \item Any $\Delta_1$-square $A$ is $(1)$-good if it is initially fully healthy. Thus we immediately see that states of all $\Delta_1$-squares are mutually independent. We also have
\[
 \PP_p(A \text{ is } (1) \text{-bad}) < (\Delta_1)^2 p = (\Delta_1)^2 (\Delta_1)^{-\delta-2} = (\Delta_1)^{-\delta} = q_1.
\]
 \item Every site in a $(1)$-good $\Delta_1$-square is $(1)$-clean (the condition of a $(1)$-clean site is empty) and therefore Condition (ii) is trivially satisfied by any $(1)$-good $\Delta_1$-square.
 \item For $k=1$ Condition (iii) is empty and is therefore trivially satisfied by any $(1)$-good $\Delta_1$-square.
\end{enumerate}

Assume now that the three conditions of Theorem \ref{thm:coversExist} are satisfied by our $(i)$-tilings for all $1 \leq i \leq k$. Let us consider the $(k+1)$-tiling of $\ZZ^2$.
\begin{enumerate}[leftmargin=0.3in]
 \item The state of any square $X$ in our $(k+1)$-tiling (either ``$(k+1)$-good'' or ``$(k+1)$-bad'') depends only on the states of squares in the $(k)$-tiling within distance $g_k = \Delta_k^\beta$ of $X$. If $\Delta_1^{\alpha - \beta} \geq 3$ then for all $k$ we have $g_k \leq \Delta_{k+1}/3$ and the states of any non-adjacent $\Delta_{k+1}$-squares $Y$ and $Z$ depend on states of non-adjacent sets of $\Delta_{k}$-squares. By induction, the states of squares in these non-adjacent sets are independent. Therefore the states of $Y$ and $Z$ are independent. Hence the states of all non-adjacent $\Delta_{k+1}$-squares are independent.

If a $\Delta_{k+1} \times \Delta_{k+1}$ square $S$ is $(k+1)$-bad then it contains or is at distance at most $g_k$ from two non-adjacent $(k)$-bad squares $X, Y$ in our $(k)$-tiling such that $\dist(X,Y) \leq g_k$. Hence, given $S$, there are at most
\[
 \left ( \frac{\Delta_{k+1}+\Delta_k+2g_k}{\Delta_k} \right )^2
\]
ways of choosing $X$ and then, assuming that $Y$ is contained in the semicircle of radius $g_k$ below $X$, we have $2 (g_k/\Delta_k)^2$ ways of choosing $Y$.  Recall that $\Delta_{k+1} < \Delta_{k}^{\alpha} + \Delta_{k}$ and that for all $k \geq 1$ we have $\Delta_k \geq \Delta_1 \geq 2^{\delta+5}$. Since $q_k = \Delta_k^{-\delta}$, where $\delta = (2\alpha+2\beta-3)/(2-\alpha)$, and the states of non-adjacent squares are independent, we have
\[
\begin{split}
 \PP_p(A \text{ is } (k+1)\text{-bad}) & < \left ( \frac{\Delta_{k+1}+\Delta_k+2g_k}{\Delta_k} \right )^2 2 \left ( \frac{g_k}{\Delta_k} \right )^2 q_k^2 \\
	& < 2 \left ( \frac{4 \Delta_{k+1}}{\Delta_k} \right )^2 \left ( \Delta_k^{\beta-1} \right )^2 \Delta_k^{-2 \delta} \\
	& < 2 \left ( 4 \Delta_{k}^{\alpha-1} \right )^2 \Delta_k^{2\beta-2 -2 \delta} \\
	& = 2^{5+\delta} \Delta_k^{-1} 2^{-\delta} \Delta_k^{2 \alpha + 2\beta-3 -2 \delta}\\
	& \leq  2^{-\delta} \Delta_k^{2\alpha+2 \beta -3 - 2 \delta} \\
	& = (2 \Delta_{k}^{\alpha})^{- \delta} \\
	& \leq q_{k+1}.
\end{split}
\]

\item If a $(k+1)$-good square $S$ does not contain any $(k)$-bad subsquare then, in particular, any square $Y$ in our $(k)$-tiling contained in the middle $\Delta_{k+1}/3 \times \Delta_{k+1}/3$ subsquare of $S$ is $(k)$-good and lies at distance at least $\Delta_{k+1}/3 > g_k/3$ from any $(k)$-bad square. Since $Y$ is $(k)$-good it contains a $(k)$-clean site $v$. Since $v$ is at distance at least $g_k/3$ from any $(k)$-bad square, $v$ is also $(k+1)$-clean.

Hence assume that $S$ contains a $(k)$-bad square $X$. Since $S$ is $(k+1)$-good, any other $(k)$-bad square within distance $g_k$ of $X$ (not necessarily contained in $S$) must be adjacent to $X$. It follows that, since $\Delta_1^{\beta-1} \geq 30$, every site at distance between $2g_k/5$ and $3g_k/5$ from $X$ is at distance at least $g_k/3$ from any $(k)$-bad square. At least a quarter of the ring of sites at distance between $2g_k/5$ and $3g_k/5$ from $X$ lies inside $S$. Additionally, this ring is thick enough to contain a $3 \Delta_k \times 3 \Delta_k$ square, which itself contains a $(k)$-good square with a $(k)$-clean site $v$. By the same argument as in the previous paragraph, $v$ is also $(k+1)$-clean.

\item Consider a $(k+1)$-good square $S$ and a union $X$ of pairwise adjacent $(k)$-bad squares intersecting $S$ (as usual, $X$ is contained within a $2 \Delta_k \times 2\Delta_k$ square). By Lemma \ref{lem:coverExists}, the definition of a $(k+1)$-good square, and since $\Delta_{1}^{\beta-1} \geq 3c(\SS)$, the $2c(\SS)\Delta_k \times 2c(\SS)\Delta_k$ square $C$ centered at $X$ does not intersect with the $2c(\SS)\Delta_k \times 2c(\SS)\Delta_k$ square centered at any other union of adjacent $(k)$-bad squares. Additionally, $C$ contains three $(k)$-good squares $C_1, C_2$ and $C_3$ with $(k)$-clean sites $c_1 \in C_1$, $c_2 \in C_2$ and $c_3 \in C_3$ such that all $(k,1)$-barriers joining $c_1$ to $c_2$, all $(k,2)$-barriers joining $c_2$ to $c_3$ and all $(k,3)$-barriers joining $c_3$ to $c_1$ are contained within $C$. Also, these barriers are disjoint from $X$, which lies inside the area bounded by them.

Therefore we need to prove that between any two of $c_1, c_2$ and $c_3$ we can find appropriate barriers avoiding $T_i(Y)$ for any union $Y$ of adjacent $(i)$-bad squares for all $i < k$. Then the union of these three barriers and the area inside them will be our desired $T_{k}(X)$, the $(k)$-triangular cover of $X$. To do this we shall prove the following crucial lemma. We would like to emphasize that this lemma is the key to the third and most important part of Theorem~\ref{thm:coversExist}. The theorem follows from the lemma in an essentially straightforward way.

\begin{lemma}
\label{lem:barriersExist}
 Let $j \geq 1$. Let $x_0$ and $y_0$ be two $(j)$-clean sites in different $(j)$-good squares such that for some $t \in \{1,2,3\}$ we have
\[
 \left | \big (\theta(u_{x_0,y_0}) - \theta(u_t) \big ) (\text{mod } 2\pi) - \pi/2 \right | < \sigma_{j} = \varepsilon/2 + \varepsilon/\Delta_j^\gamma.
\]
Suppose also that all $\Delta_j\times\Delta_j$ squares in our $(j)$-tiling within distance $\Delta_{j}$ of the segment with $x_0$ and $y_0$ as endpoints are $(j)$-good. Then there exists a $(j,t)$-barrier joining $x_0$ to $y_0$ that does not intersect the $(i)$-triangular cover $T_i(X)$ of any union $X$ of neighbouring $(i)$-bad squares for any $i<j$.
\end{lemma}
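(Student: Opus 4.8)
The plan is to prove the lemma by induction on $j$, interleaved with the induction that establishes Theorem~\ref{thm:coversExist}, so that at scale $j$ we may invoke the lemma at all scales $j'<j$ together with conditions~(i)--(iii) of Theorem~\ref{thm:coversExist} at all scales $\le j-1$; in particular each cover $T_i(X)$ with $i<j$ is defined, and every $(j-1)$-good square contains a $(j-1)$-clean site. It will be convenient to strengthen the inductive statement so that it also records that the $(j,t)$-barrier produced is contained in a tube of width $O(c(\SS)\Delta_{j-1})$ about the segment $[x_0,y_0]$; this narrowness is what makes the recursion close. The base case $j=1$ is immediate: there is no cover $T_i(X)$ with $i<1$ to avoid, so one takes the $\rr(\UU)$-tube about $[x_0,y_0]$, which is a $(1,t)$-barrier since $u_{x_0,y_0}$ is within $\sigma_1$ of the perpendicular to $u_t$, and which is moreover healthy because all $\Delta_1$-squares within distance $\Delta_1\ge\rr(\UU)$ of the segment are $(1)$-good.

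For the inductive step fix $j\ge 2$ and write $\ell=[x_0,y_0]$. First I would pin down the obstacles: the covers $T_{j-1}(X)$ of the components $X$ of $(j-1)$-bad squares lying within $O(c(\SS)\Delta_{j-1})$ of $\ell$. Every $(j-1)$-bad square that close to $\ell$ sits inside a $\Delta_j$-square at distance less than $\Delta_j$ from $\ell$, hence inside a $(j)$-good $\Delta_j$-square, so its component $X$ lies in $\XX_{j-1}$ and $T_{j-1}(X)$ is defined; moreover the definition of a $(j)$-good square forces any two distinct such components to be at distance more than $g_{j-1}$ from one another, while each sits in a $2\Delta_{j-1}\times 2\Delta_{j-1}$ square, so, its cover being tight, each $T_{j-1}(X)$ has diameter $O(c(\SS)\Delta_{j-1})$, which by $\Delta_1^{\alpha-1}\ge 12c(\SS)$ is much smaller than $\Delta_j$. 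Since $x_0$ and $y_0$ are $(j)$-clean they lie at distance at least $g_{j'}/3$ from every $(j')$-bad square for each $j'<j$, hence (by $\Delta_1^{\beta-1}\ge 3c(\SS)$) outside every cover $T_i(X)$ with $i<j$, and in particular well away from the first and last obstacles along $\ell$.

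The heart of the argument is to build the anchor $x_0=z_0,z_1,\dots,z_m=y_0$ of the required $(j,t)$-barrier. The idea is to shadow $\ell$ and, on approaching an obstacle $T_{j-1}(X)$, to make a small bump around it, each $z_l$ being a $(j-1)$-clean site in a $(j-1)$-good square (available by condition~(ii) of Theorem~\ref{thm:coversExist} at scale $j-1$) near the prescribed corner of the bump. The key quantitative input is the angular slack $\sigma_{j-1}-\sigma_j=\varepsilon(\Delta_{j-1}^{-\gamma}-\Delta_j^{-\gamma})=\Theta(\varepsilon\Delta_{j-1}^{-\gamma})$: this is the amount by which an edge of the level-$j$ anchor may deviate from a near-extremal ``parent'' direction while still lying within $\sigma_{j-1}$ of the perpendicular to $u_t$, so that over a run of length $L$ along $\ell$ the anchor can be displaced laterally by $\Theta(L\varepsilon\Delta_{j-1}^{-\gamma})$. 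Clearing one obstacle, with a margin exceeding its diameter $O(c(\SS)\Delta_{j-1})$ and the width $O(c(\SS)\Delta_{j-2})$ of the $(j-1,t)$-barrier we are about to superimpose, therefore calls for a run of order $c(\SS)\Delta_{j-1}^{1+\gamma}/\varepsilon$, which is far smaller than $g_{j-1}=\Delta_{j-1}^{\beta}$ precisely because $\beta>1+\gamma$ and $\Delta_1^{\beta-1-\gamma}\ge 68\,c(\SS)/\varepsilon$ (the constant $68$ absorbing the further small perturbation incurred when each $z_l$ is snapped to a genuine clean site). As consecutive obstacles are more than $g_{j-1}$ apart, there is always room to make one bump and, if necessary, to correct any accumulated drift before reaching the next, so that the whole anchor — and hence, by the narrowness carried in the inductive hypothesis, the barrier being built — stays in a tube of width $O(c(\SS)\Delta_{j-1})\ll\Delta_j$ about $\ell$; each edge $[z_{l-1},z_l]$ thus ends up within $\sigma_{j-1}$ of the perpendicular to $u_t$, at distance more than $2\Delta_{j-1}$ from every $(j-1)$-bad square (so that all $\Delta_{j-1}$-squares within $\Delta_{j-1}$ of it are $(j-1)$-good), and at distance exceeding the width of a $(j-1,t)$-barrier from every obstacle $T_{j-1}(X)$.

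Finally, I would apply the induction hypothesis to each edge $[z_{l-1},z_l]$: it yields a $(j-1,t)$-barrier $B_l$ joining $z_{l-1}$ to $z_l$, contained in an $O(c(\SS)\Delta_{j-2})$-tube about the edge, that avoids $T_i(X)$ for every $i<j-1$; by the margins just described, $B_l$ also misses every $T_{j-1}(X)$ and stays inside $(j)$-good $\Delta_j$-squares. Then $B=\bigcup_{l=1}^m B_l$ is a $(j,t)$-barrier joining $x_0$ to $y_0$, contained in an $O(c(\SS)\Delta_{j-1})$-tube about $\ell$, that avoids $T_i(X)$ for every $i<j$, which both completes the induction and verifies the strengthened statement. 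I expect the main obstacle to be exactly the geometric bookkeeping of the third paragraph: one must choose the bumps so that, simultaneously, every edge respects the $\sigma_{j-1}$ constraint, the drift never exceeds its budget, and each of the finitely many obstacles near $\ell$ is cleared by more than the width of the sub-barriers — and it is the parameter inequalities in~\eqref{eq:parameters}, together with the lower bounds imposed on $\Delta_1$, that are exactly tailored to make all these demands compatible.
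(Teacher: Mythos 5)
Your proposal is correct and takes essentially the same approach as the paper's proof: an induction on $j$, interleaved with the induction for Theorem~\ref{thm:coversExist}, in which the anchor of the level-$j$ barrier is built from $(j-1)$-clean sites that dodge the (isolated, tightly covered) components of $(j-1)$-bad squares, with the angular budget $\sigma_{j-1}-\sigma_j\asymp\varepsilon\Delta_{j-1}^{-\gamma}$ and the separation $g_{j-1}$ combining, via $\Delta_1^{\beta-1-\gamma}\ge 68c(\SS)/\varepsilon$, to make the dodges fit. The one presentational difference is that you make explicit a ``narrow-tube'' strengthening of the inductive hypothesis, whereas the paper instead realises the same control by placing the dodge points on two fixed parallel offsets $Z_1,Z_2$ of the segment at distance $8c(\SS)\Delta_{j-1}$ and observing that each obstacle misses at least one of the three tubes; these are two ways of packaging the same geometric bookkeeping.
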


\begin{proofOfLemma}
For $j=1$ the assertion is empty and so the lemma is trivial. Thus assume that the lemma holds for $j \leq m$. Let $x_0$ and $y_0$ be two $(m+1)$-clean sites in different $(m+1)$-good squares such that, for some $t \in \{1,2,3\}$,
\[
 \left | \big (\theta(u_{x_0,y_0}) - \theta(u_t) \big ) (\text{mod } 2\pi) - \pi/2 \right | < \sigma_{m+1}
\]
holds. Recall that every $(m+1)$-clean site is also $(m)$-clean.

Let
\[
\begin{split}
 x_1 & = x_0 + 8 c(\SS) \Delta_{m} u \big (\theta(u_{x,y})+\pi/2 \big ), \\
 y_1 & = y_0 + 8 c(\SS) \Delta_{m} u \big (\theta(u_{x,y})+\pi/2 \big ), \\
 x_2 & = x_0 + 8 c(\SS) \Delta_{m} u \big (\theta(u_{x,y})-\pi/2 \big ), \\
 y_2 & = y_0 + 8 c(\SS) \Delta_{m} u \big (\theta(u_{x,y})-\pi/2 \big ),
\end{split}
\]
and for $\ell = 0,1,2$ let
\[
 \qquad Z_\ell = \{v \in \ZZ^2 : \dist(v, \lambda x_\ell + (1-\lambda) y_\ell) \leq 4 c(\SS) \Delta_{m}  \text{ for some } \lambda \in [0,1]\}
\]
(see Figure~\ref{fig:detour}).

If $\Delta_{m+1} \geq 12 c(\SS) \Delta_m$, which is true since $\Delta_{1}^{\alpha-1} \geq 12 c(\SS)$, then $\bigcup_{\ell = 1}^3 Z_\ell$ is contained in a union of $(m+1)$-good squares. This implies that every union of pairwise adjacent $(m)$-bad squares intersecting $\bigcup_{\ell = 1}^3 Z_\ell$ is at distance at least $g_m$ from any other $(m)$-bad square. Additionally, the $(m)$-triangular cover of any union $X$ of pairwise adjacent $(m)$-bad squares, being contained in the $2 c(\SS) \Delta_m \times 2 c(\SS) \Delta_m$ square centered at $X$, intersects at most two of the sets $Z_\ell$.

Assume that $\bigcup_{\ell = 1}^3 Z_\ell$ intersects $d$ such $2 c(\SS) \Delta_m \times 2 c(\SS) \Delta_m$ squares containing unions of adjacent $(m)$-bad squares: $Y_1, Y_2, \ldots , Y_d$, ordered according to their distance from $x_0$. For every $s \in [d]$, let $y_s \in \RR^2$ be the centre of $Y_s$ and let $\ell_s \in \{1,2\}$ be an index of a set $Z_\ell$ that is avoided by $Y_s$. Then in $Z_{\ell_s}$ we can find an $(m)$-good square $C_{s}$ at distance at least $4c(\SS) \Delta_m$ and at most $6c(\SS) \Delta_m$ from $Z_0$, with an $(m)$-clean site $z_s \in C_s$, such that the distance between $z_{s}$ and the line going through $x_0, x_1$ and $x_2$ differs from the distance between $y_s$ and that line by at most $\Delta_m$. Note that the conditions on the location of $C_{s}$ imply that $C_{s}$ is at distance at least $3 c(\SS) \Delta_m/2$ from $\ZZ^2 \setminus Z_{\ell_s}$. See Figure \ref{fig:detour} for a graphical interpretation of this description.

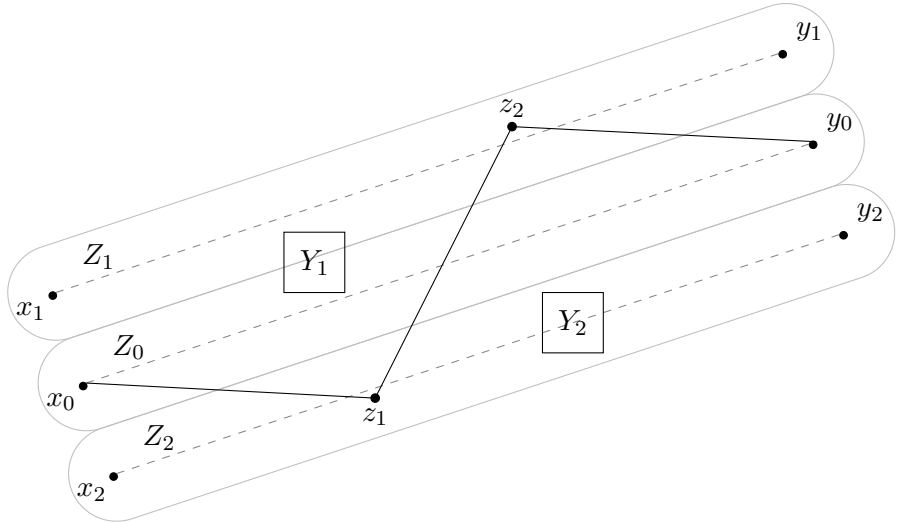
\begin{figure}[ht]
  \centering
  \begin{tikzpicture}[scale=\latticescale]

    \foreach \xi/\yi/\xii/\yii/\j in {0/0/12/4/0, -0.5/1.5/11.5/5.5/1, 0.5/-1.5/12.5/2.5/2} {
      \draw [below left, help lines, dashed] (\xi,\yi) -- (\xii,\yii);
      \draw (\xi,\yi) [below left] node {$x_\j$};
      \draw (\xii,\yii) [above right] node {$y_\j$};
      \node[draw,circle,inner sep=0,minimum size=0.1cm,fill,below left] at (\xi,\yi) {};
      \node[draw,circle,inner sep=0,minimum size=0.1cm,fill,below left] at (\xii,\yii) {};
      \draw [lightgray, xshift=\xi cm, yshift=\yi cm] (108.435:0.7906) arc (108.435:288.435:0.7906) -- ++(12,4) arc (-71.565:108.435:0.7906) -- cycle;
      \draw ({\xi+0.7},{\yi+0.6}) node {$Z_\j$};
    }

    \draw (0,0) -- plot[mark=*] coordinates {(4.75,-0.25)} -- plot[mark=*] coordinates {(7,4.25)} -- (12,4);
    \draw (3.25,1.5) rectangle +(1,1);
    \draw (7.5,0.5) rectangle +(1,1);
    \draw (3.755,2) node {$Y_1$};
    \draw (8,1) node {$Y_2$};
    \draw (4.75,-0.25) [below] node {$z_1$};
    \draw (7,4.25) [above] node {$z_2$};

  \end{tikzpicture}
  \caption{The $(m)$-clean sites $z_1$ and $z_2$ used to bypass unions of adjacent $(m)$-bad squares $Y_1$ and $Y_2$ and to inductively construct an $(m+1,t)$-barrier joining $x_0$ to $y_0$.}
  \label{fig:detour}
\end{figure}

Set $z_0 = x_0$ and $z_{d+1} = y_0$. Note that if the segment joining $z_s$ to $z_{s+1}$ is at distance at least $\Delta_m$ from any $(m)$-triangular cover of any union of adjacent $(m)$-bad squares (this clearly implies that the segment is at distance at least $\Delta_m$ from any $(m)$-bad square) and if
\[
 \left | \big (\theta(u_{z_s,z_{s+1}}) - \theta(u_t) \big ) (\text{mod } 2\pi) - \pi/2 \right | < \sigma_{m},
\]
then by the induction hypothesis there exists an $(m,t)$-barrier joining $z_s$ to $z_{s+1}$ satisfying the lemma. If this holds for all pairs of consecutive $z_s$s then these $(m,t)$-barriers together constitute an $(m+1,t)$-barrier joining $x_0$ to $y_0$ which avoids, for all $i \leq m$, $(i)$-triangular covers of all unions of neighbouring $(i)$-bad squares.

Since $\Delta_{1}^{\beta-1} \geq 30$, using the bound $\arcsin \phi \leq \pi \phi / 2$ for $\phi \in [0,1]$, the difference between $\theta(u_{z_s,z_{s+1}})$ and $\theta(u_{x_0,y_0})$ modulo $2 \pi$ is bounded from above by
\[
 \arcsin{\left ( \frac{20 c(\SS) \Delta_{m}}{g_m-2\Delta_{m}} \right ) } < \frac{75 \pi c(\SS) \Delta_{m}}{7 g_m} < \frac{\varepsilon}{2\Delta_m^\gamma}
\]
for all $m \geq 1$ since $\Delta_{1}^{\beta-1-\gamma} \geq 68 c(\SS)/\varepsilon$. Since
\[
 \sigma_{m} - \sigma_{m+1} = \frac{\varepsilon}{\Delta_m^{\gamma}} - \frac{\varepsilon}{\Delta_{m+1}^{\gamma}} > \frac{\varepsilon}{2\Delta_m^{\gamma}},
\]
we see that for $\Delta_{1} \geq (68 c(\SS)/\varepsilon)^{1/(\beta-1-\gamma)} $ the angles between consecutive $z_s$s allow us to find $(m,t)$-barriers between these sites.

Let us then show that the segment joining $z_s$ to $z_{s+1}$ is at distance at least $\Delta_m$ from any $(m)$-triangular cover of any union of adjacent $(m)$-bad squares. First, we observe that $z_s$ and $z_{s+1}$ are at distance at least $3 c(\SS) \Delta_m/2$ from $\ZZ^2 \setminus \bigcup_{\ell = 1}^3 Z_\ell$, so we do not need to consider $(m)$-bad squares lying outside $\bigcup_{\ell = 1}^3 Z_\ell$.

We chose $z_s$ to be at distance at least $4 c(\SS) \Delta_m$ from $Z_0$, and consequently also from $Y_s$. Let $w'$ be a site in a $(m)$-triangular cover of $Y_s$. Then, by Lemma \ref{lem:coverExists}, the distance between $w$ and the line going through $x_0, x_1$ and $x_2$ is not larger than the distance between $z_s$ and this line by more than $2 c(\SS) \Delta_m$. Let $w''$ be a point in the segment joining $z_s$ to $z_{s+1}$ at distance at most $2c(\SS) \Delta_m$ from $Z_0$. If $\varepsilon/(2\Delta_m^\gamma) \leq \pi/8$, which is true whenever $\varepsilon \leq \pi/4$, then $w''$ is at distance from the line going through $x_0, x_1$ and $x_2$ larger by at least $4 c(\SS) \Delta_m$ than $z_s$ is. Therefore, the segment joining $z_s$ to $z_{s+1}$ is at distance at least $2c(\SS) \Delta_m$ from $Y_s$ and so at distance at least $\Delta_m$ from $T_m(Y_s)$. In a similar way we show that it is at distance at least $\Delta_m$ from $T_m(Y_{s+1})$. By the choice of the ordering of the squares $Y_s$ we know that no other $(m)$-triangular cover of any union of adjacent $(m)$-bad squares is near the segment joining $z_s$ to $z_{s+1}$ and the lemma is proved.
\end{proofOfLemma}

From Lemma \ref{lem:coverExists} and Lemma \ref{lem:barriersExist} it follows immediately that for any union $X$ of adjacent $(k)$-bad squares inside a $(k+1)$-good square we can find a $(k)$-triangular cover $T_k(X)$ of $X$ inside the $2 c(\SS) \Delta_k \times 2 c(\SS) \Delta_k$ square centered at $X$, satisfying the requirements of Theorem \ref{thm:coversExist}.
\end{enumerate}
This completes the proof of the theorem.
\end{proof}

In the next lemma we show that the collection of triangular covers, which by Theorem \ref{thm:coversExist} almost surely exists if $p > 0$ is sufficiently small, contains every site of $\ZZ^2$ that ever becomes infected.

Recall that $\XX_k$ is the collection of all sets $X\subset\ZZ^2$ such that $X$ is a union of pairwise adjacent $(k)$-bad squares and $X$ intersects a $(k+1)$-good square.

\begin{lemma}
\label{lem:mainFollows}
Given a subcritical family $\UU$, let $p = (\Delta_1)^{-\delta-2} > 0$ be small enough so that Theorem \ref{thm:coversExist} holds. Let $A \sim \Bin(\ZZ^2,p)$. Then, almost surely,
\[
 \closure{A} \subset Z = \bigcup_{i \geq 1} \bigcup_{X\in\XX_i} T_i(X).
\]
\end{lemma}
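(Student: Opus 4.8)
The plan is to show that every eventually infected site lies in some triangular cover, by combining the deterministic containment property of Lemma~\ref{lem:bordersSurvive} with the structural properties of the covers guaranteed by Theorem~\ref{thm:coversExist}. First I would invoke Theorem~\ref{thm:coversExist}: since $p = (\Delta_1)^{-\delta-2}$ is small enough, for every $k \geq 1$ the three conditions hold, so for each $X \in \XX_k$ there is a tight $(k)$-triangular cover $T_k(X)$, and these covers are nested-or-disjoint both within a single scale and across scales. The key deterministic input is that, by Lemma~\ref{lem:bordersSurvive}, each $T_k(X)$ is closed under $\UU$ once we remove its bounding barrier cover $B_k(X)$; in particular $[T_k(X) \setminus B_k(X)] = T_k(X) \setminus B_k(X)$, so infection cannot propagate out of $T_k(X)$ across the thick healthy barrier $B_k(X)$ of width $\geq \rr(\UU)$.

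The main step is to argue that $Z = \bigcup_{i\geq 1}\bigcup_{X\in\XX_i} T_i(X)$ is itself closed under $\UU$, and contains $A$. For the containment $A \subset Z$: if $v \in A$ then the $\Delta_1$-square containing $v$ is $(1)$-bad; taking the maximal union $X$ of pairwise adjacent $(1)$-bad squares containing it, either $X$ meets a $(2)$-good square, in which case $X \in \XX_1$ and $v \in T_1(X)$, or $X$ is contained in a $(2)$-bad square, and we pass up to scale $2$; iterating, since the $(i)$-bad squares at a given scale that fail to meet an $(i+1)$-good square are confined to a single $(i+1)$-bad square, this process terminates (a union of $(i)$-bad squares not meeting an $(i+1)$-good square is surrounded by $(i)$-bad squares, forcing the enclosing square to be $(i+1)$-bad, but bad squares cannot form an infinite nested chain with positive probability under Condition~\eqref{cond:cond1}, since $\sum_i q_i < \infty$ for suitable parameters) — so almost surely $v$ lands in some $T_i(X)$ with $X \in \XX_i$. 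For closure of $Z$: suppose $w \notin Z$ but $w$ becomes infected, i.e.\ for some update rule $X_j$ we have $w + X_j \subset [A]$ and, inductively on the infection time, $w+X_j \subset Z$. Because $\rr(\UU)$ is the diameter bound on update rules and every $T_i(X)$ is bounded away from other covers and from bad squares at all smaller scales by barriers of width $\geq \rr(\UU)$, the finite set $w+X_j$ must lie within a single cover $T_i(X)$ — it cannot straddle two disjoint covers, nor can it reach out of a cover whose barrier it would have to cross. But then $w \in [T_i(X)] = T_i(X) \subset Z$ by Lemma~\ref{lem:bordersSurvive}, a contradiction. Hence $Z$ is closed under $\UU$ and $A \subset Z$, so $[A] \subset Z$.

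I expect the delicate point to be verifying that the set $w + X_j$ really is captured by a \emph{single} cover, rather than split across several or reaching between scales — this is where the ``tight'' property (each $T_i(X)$ sits inside the $c(\SS)\Delta_i$-box around $X$), the separation of covers of distinct elements of $\XX_k$, and the across-scale nesting dichotomy from Condition~\eqref{cond:cond3} all have to be used together, along with the fact that a cover's barrier has thickness at least $\rr(\UU) \geq \diam(X_j)$. One must also check that an infected site at infection time $t+1$ whose witnessing translate lies in $Z$ does not lie in the barrier region $B_i(X)$ of the enclosing cover but rather in $T_i(X)\setminus B_i(X)$; this follows since the barriers consist of initially healthy sites that, by Lemma~\ref{lem:bordersSurvive}, never become infected. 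The termination-of-scales argument in the containment step also needs the Borel--Cantelli-type observation that, almost surely, no site is contained in $(i)$-bad squares for infinitely many $i$, which is where the summability of $\sum q_i$ (guaranteed by the choice of parameters in~\eqref{eq:parameters}) enters.
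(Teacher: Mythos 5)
Your proposal is correct and follows essentially the same two-part argument as the paper: first $A\subset Z$ via the Borel--Cantelli estimate $\sum_i q_i<\infty$ (so each site lies in an $(i+1)$-good square for some finite $i$, and iterating up scales from the $(1)$-bad square containing an initially infected site lands it in some $X\in\XX_i$), and then closure of $Z$ via Lemma~\ref{lem:bordersSurvive}, the nested-or-disjoint dichotomy of Theorem~\ref{thm:coversExist}(iii), and the fact that the barriers are $\rr(\UU)$-thick. The caveat you yourself flag is the right one to watch: Lemma~\ref{lem:bordersSurvive} gives $[T_i(X)\setminus B_i(X)]=T_i(X)\setminus B_i(X)$ rather than $[T_i(X)]=T_i(X)$, so the cleanest form of the closure step is to work with $Z'=\bigcup_i\bigcup_{X\in\XX_i}\bigl(T_i(X)\setminus B_i(X)\bigr)\subset Z$, using that $T_i(X)\setminus B_i(X)$ is at distance greater than $\rr(\UU)$ from $\ZZ^2\setminus T_i(X)$ and that smaller covers avoid the barrier $B_i(X)$, so that any witnessing translate $w+X_j\subset Z'$ actually lies entirely inside a single $T_i(X)\setminus B_i(X)$.
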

\begin{proof}
By the definition of the closure, the set $\closure{A}$ is the smallest set that contains $A$ and is closed under $\UU$.

We show first that $A\subset Z$. Note that since we define $q_i = \Delta_i^{-\delta} \leq \Delta_1^{-\alpha^{i-1} \delta}$, we have $\sum_{i \geq 1} q_i < \infty$. Every $\Delta_1$-square that contains at least one initially infected site is $(1)$-bad and, by the Borel-Cantelli lemma, $\sum_{i \geq 1} q_i < \infty$ implies that every site in $\ZZ^2$ is contained in infinitely many good squares almost surely. In particular, every initially infected site will be contained in the triangular cover of a union of adjacent $(i)$-bad squares intersecting an $(i+1)$-good square, for some $i\geq 1$. Thus to prove the lemma we just need to show that $Z$ is closed under $\UU$.

As shown in Lemma \ref{lem:bordersSurvive}, for any $i \geq 1$ the $(i)$-triangular cover of any union $X$ of adjacent $(i)$-bad squares is closed under $\UU$. Moreover, the infected interior of the cover is separated from $\ZZ^2 \setminus T_i(X)$ by a healthy barrier of thickness at least $\rr(\UU)$. By condition (iii) in Theorem \ref{thm:coversExist}, for all $i \geq j \geq 1$, any union $X$ of adjacent $(i)$-bad squares and any union $Y$ of adjacent $(j)$-bad squares satisfy either $T_j(Y) \subset T_i(X)$ or $T_j(Y) \cap T_i(X) = \emptyset$. Hence, by the definition of $\rr(\UU)$, any collection of triangular covers is closed under $\UU$ and, in particular, so is $Z$. This means that $\closure{A} \subset Z$ and the proof of the lemma is complete.
\end{proof}

Equipped with Theorem \ref{thm:coversExist} and Lemma \ref{lem:mainFollows}, we are now in a position to prove Theorem \ref{thm:subpc}.

\begin{proofOfMain}
Having proved Theorem \ref{thm:coversExist} and Lemma \ref{lem:mainFollows}, to prove the inequality $p_c(\ZZ^2,\UU) > 0$ in Theorem \ref{thm:subpc} it is enough to show that for $p > 0$ small enough the probability that there exists $i \geq 1$ and a union $X$ of adjacent $(i)$-bad squares such that the site $(0,0)$ belongs to the $2c(\SS) \Delta_i \times 2c(\SS) \Delta_i$ square centered at $X$ is strictly less than $1$. This clearly implies that the probability that the origin belongs to some $(i)$-triangular cover of adjacent $(i)$-bad squares is strictly less than $1$.

Given $\alpha$, $\beta$, $\gamma$ and $\delta$ satisfying \eqref{eq:parameters}, let $\Delta_1$ be large enough to satisfy all conditions imposed on it at the beginning of the proof of Theorem \ref{thm:coversExist}. Since in the proof of Theorem \ref{thm:coversExist} we take $p = (\Delta_1)^{-\delta-2}$, this implies an appropriate condition on $p$.

The probability that there exists $i \geq 1$ and a union $X$ of adjacent $(i)$-bad squares such that the site $(0,0)$ belongs to the $2c(\SS) \Delta_i \times 2c(\SS) \Delta_i$ square centered at $X$ can be bounded from above by the expected number of such squares, which is at most
\[
\sum_{i \geq 1} (2c(\SS)+2)^2 q_i \leq 5(c(\SS))^2 \sum_{i \geq 1} \Delta_i^{-\delta} \leq 5(c(\SS))^2 \sum_{i \geq 0} \Delta_1^{-\delta \alpha^{i}}.
\]
We have $\delta = \frac{2\alpha+2\beta-3}{2-\alpha} > 1$ and so, in the proof of Theorem \ref{thm:coversExist},
\[
p = (\Delta_1)^{-\delta-2} > \Delta_1^{-3 \delta}.
\]
Therefore we obtain
\[
\begin{split}
\PP_p ([A] = \ZZ^2) & \leq 5(c(\SS))^2 \sum_{i \geq 0} p^{\alpha^{i}/3} \\
		    & \leq 5(c(\SS))^2 \left ( p^{1/3} + \sum_{i \geq 1} p^{(\alpha \log \alpha i + \alpha ( 1 - \log \alpha ))/3} \right ),
\end{split}
\]
where in the second inequality we use the convexity of the function $f(x) = \alpha^x$, which implies $f(x) \geq f(1)+f'(1)(x-1)$. With $p < 2^{-3/(\alpha \log \alpha)}$ it follows that
\begin{equation}
\label{eq:finalBound}
\PP_p ([A] = \ZZ^2) \leq 5(c(\SS))^2 \left ( p^{1/3} + 2 p^{\alpha/3} \right ).
\end{equation}
Thus if $5(c(\SS))^2 \left ( p^{1/3} + 2 p^{\alpha/3} \right ) < 1$ then $p \leq p_c(\ZZ^2,\UU)$ and the proof of the inequality $p_c(\ZZ^2,\UU) > 0$ in Theorem \ref{thm:subpc} is complete.

We finally prove that $p_c(\ZZ^2,\UU) = 1$ if and only if $\SS=S^1$. To show that $\SS \neq S^1$ implies $p_c(\ZZ^2,\UU) < 1$ we couple bootstrap percolation with site percolation, using a standard argument. If we initially infect all sites in $\ZZ^2$ independently with probability $p < 1$ large enough then almost surely every initially healthy cluster in $\ZZ^2$ is not only finite, but is also surrounded by an annulus of initially infected sites of thickness at least $\rr(\UU)$. Then, if $u \in S^1 \setminus \SS$, we must have an $X_i \in \UU$ such that $X_i \subset \HH_u$ and every finite cluster of healthy sites is infected by the dynamics with the use of update rule $X_i$.

To show the converse we use following simple argument. Assume that $\SS=S^1$, so that all update rules in $\UU$ do not destabilize any direction, i.e., for all $i \in [m]$ the origin belongs to the convex hull of $X_i$. For any $r > 0$ and $p < 1$, if we initially infect all sites in $\ZZ^2$ with probability $p$ then almost surely somewhere in $\ZZ^2$ we obtain an initially healthy disk $D_r$ of radius $r$. If $r$ is large enough then every rule $X_i$ can only infect sites in disjoint circular segments ``cut off'' from $D_r$ using chords of length at most $\rr(\UU)$ and parallel to the sides of the convex hull of $X_i$, and these segments are all either disjoint or contained in each other for different rules (that again follows from the fact that we take $r$ large, see Figure \ref{fig:disjointChords}). Because no additional infection takes place in $D_r$, we do not have percolation. That completes the proof of Theorem \ref{thm:subpc}.

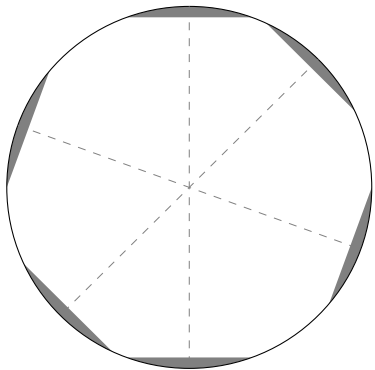
\begin{figure}[ht]
  \centering
  \begin{tikzpicture}[scale=\tikzscale]
    \fill [gray] (25:2) arc (25:65:2) -- cycle;
    \fill [gray] (205:2) arc (205:245:2) -- cycle;
    \fill [gray] (70:2) arc (70:110:2) -- cycle;
    \fill [gray] (250:2) arc (250:290:2) -- cycle;
    \fill [gray] (140:2) arc (140:180:2) -- cycle;
    \fill [gray] (320:2) arc (320:360:2) -- cycle;
    \draw [black] (0,0) circle (2);
    \draw [help lines,dashed,gray] (45:2) -- (225:2);
    \draw [help lines,dashed,gray] (90:2) -- (270:2);
    \draw [help lines,dashed,gray] (160:2) -- (340:2);
  \end{tikzpicture}
  \caption{Set of disjoint circular segments cut off from $D_r$ using chords perpendicular to directions $u(\theta)$ for $\theta \in \{\pi/4, \pi/2, 8\pi/9 \}$.}
  \label{fig:disjointChords}
\end{figure}
\end{proofOfMain}

We finally prove the lower bound on $p_c(\triD,2)$ in Corollary \ref{cor:DTBP}. We emphasize that because our proof is very general, the bounds it gives in specific cases are likely to be far from optimal.
\begin{proofOfDTBP}
For the update family $\UU_1$ equivalent to DTBP we have $\rr(\UU_1) = \dist((-1,-1),(0,1)) = \sqrt{5} < 2.24$. Since in \eqref{eq:vecsum} we are free to take any $u_1$, $u_2$ and $u_3$ that satisfy this equation for some positive values of the $\lambda_i$ and lie inside open intervals of stable directions that do not intersect the forbidden set, we choose $\theta(u_1) = 7\pi/24$, $\theta(u_2) = 23\pi/24$ and $\theta(u_3) = 39\pi/24$. This implies that $\theta(u_3) - \theta(u_2) = \theta(u_2) - \theta(u_1) = \theta(u_1) - \theta(u_3) = 2 \pi / 3$. Also, for $t=1,2,3$ and $|\theta(u) - \theta(u_t)| < \pi/24$, direction $u$ is stable and not forbidden.

From these values of $\theta(u_t)$ we get $\varepsilon_0 > 0.02293 \pi$ and $r_{\varepsilon_0} < 24.04$. This gives $c(\SS) = 361$. We choose $\alpha = 1.5$ and simplifying in \eqref{eq:finalBound} we obtain $\PP_p ([A] = \ZZ^2) \leq 15(c(\SS))^2 p^{1/3}$, which is less than $1$ when $p < 10^{-19}$. This implies the condition $\Delta_1 > 10^{19/(\delta+2)}$. Taking $\beta = 1.45$ and $\gamma = 0.01$, this condition and the ones at the beginning of the proof of Theorem \ref{thm:coversExist} are satisfied for $\Delta_1 \geq 10^{13}$. Since we have $\delta = 5.8$ this implies that $p_c(\triD,2) > 2.5 \cdot 10^{-101}$ and the proof of Corollary \ref{cor:DTBP} is complete.
\end{proofOfDTBP}

\section{Update families with two opposite strongly stable directions}
\label{sec:percolationCoupling}

In this section we present an elementary proof of the fact that the critical probability is strictly positive for all update families with two opposite strongly stable directions, i.e., for families $\UU$ such that for some $u \in S^1$ we have $u, -u \in \SSS(\UU)$. The following theorem is of course only a particular subcase of Theorem \ref{thm:subpc} but it covers all previously analysed subcritical bootstrap percolation models \cite{criticalPoints,glassTransition,jammingPercolation,forceBalance}. (Of course, the point of those papers was not, as here, to prove that the critical probability is positive, but rather to determine quite precise information about its location.)

\begin{theorem}
\label{thm:percolationCoupling}
 For every update family $\UU$ such that $\{u, -u\} \subset \SSS(\UU)$ for some $u \in S^1$, we have $p_c(\ZZ^2,\UU) > 0$.
\end{theorem}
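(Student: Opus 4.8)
The plan is to exploit the hypothesis $\{u,-u\}\subset\SSS(\UU)$ to couple $\UU$-bootstrap percolation from below with a one-dimensional oriented process, reducing the positivity of $p_c$ to the elementary fact that a biased random walk (or a subcritical branching/percolation argument) on $\ZZ$ has a positive survival barrier. First I would fix a rational direction $w$ with $\{w,-w\}\subset\SSS(\UU)$ (we may take $w$ rational since $\SSS$ is open and contains intervals with rational endpoints, by Theorem~1.10 of \cite{neighbourhoodBootstrap}), and change coordinates by an integer-linear map so that $w=(0,1)$, hence $u(\pi/2)$ and $u(3\pi/2)$ are both strongly stable. Thus there is $\varepsilon>0$ such that every direction within $\varepsilon$ of $u(\pi/2)$ or of $u(3\pi/2)$ is stable, i.e. both the ``up'' half-plane $\HH_{u(3\pi/2)}$ and the ``down'' half-plane $\HH_{u(\pi/2)}$, together with all their slight tilts, are closed under $\UU$.

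Next I would slice $\ZZ^2$ into horizontal strips $L_k = \ZZ\times\{k\}$ and argue that, because both the up and down directions (and a wedge of tilted directions around them) are stable, no new infection can propagate between a strip and its neighbours unless one of the strips is already ``heavily'' infected in a local window; more precisely, the key claim is that there is a constant $C=C(\UU)$ such that if a finite set $A$ has the property that, in coordinates, within every horizontal window of width $C$ in every row the set $A$ contains no $\rr(\UU)$-dense configuration, then $[A]=A$. This is the analogue of Lemma~\ref{lem:bordersSurvive}: a site $v$ can only be infected using a rule $X_i$, and since the origin either lies outside the open cone of unstable directions of $X_i$ in the $\pm w$ directions, or the rule is ``balanced'' (contains $0$ in its convex hull, so only produces bounded local fill-in), every such infection requires two already-infected sites close to $v$ on roughly the same or adjacent rows. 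I would make this precise by defining a renormalized site process: declare a column-block $[jC,(j+1)C)\times\{k\}$ ``bad'' if it contains an initially infected site, good otherwise, so bad blocks occur with probability $\le C p$ independently enough, and show that a good region sandwiched between healthy barriers cannot be invaded.

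Then the coupling: I would show $0\in[A]$ forces the existence of an infinite self-avoiding oriented path of bad blocks — essentially, tracing back the infection of the origin through the dynamics, each infection event is ``caused'' by two nearby earlier infections, giving a binary tree of causation whose leaves are initially infected sites, and the geometry (stability of $\pm w$) forces this tree to be spatially spread out enough that it contains a long path of bad blocks within a bounded-slope cone. Since bad blocks appear independently with probability $O(p)$, a standard Peierls/oriented-percolation argument (or simply a first-moment bound on the number of such paths of length $n$, which is at most $c_0^n (Cp)^{n/C_1}$) shows that for $p$ small enough no such infinite path exists almost surely, hence $\PP_p(0\in[A])<1$ and in fact $\to 0$ as $p\to0$; ergodicity then upgrades this to $p_c(\ZZ^2,\UU)>0$.

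The main obstacle will be making the ``causation tree is spatially spread'' step rigorous without reproving a large chunk of Lemma~\ref{lem:bordersSurvive}: one must control the fact that rules not destabilizing $\pm w$ can still produce \emph{some} local infection (bounded fill-in governed by $\rr(\UU)$ and the forbidden directions), and check that these bounded fill-ins cannot chain up indefinitely along a row to bridge a healthy vertical corridor. The cleanest route, which I would follow, is to sidestep the tree entirely: directly define vertical healthy ``corridors'' of width $\rr(\UU)$ running between columns of good blocks, invoke the stability of the tilted up/down half-planes to show (exactly as in Lemma~\ref{lem:bordersSurvive}, cases (1)–(3)) that the region to one side of such a corridor stays closed, and then show that for small $p$ the origin is almost surely enclosed on the left and right by such corridors — reducing the whole theorem to a one-dimensional statement about the existence of long runs of good blocks, which is immediate from $\PP(\text{bad})\le Cp$.
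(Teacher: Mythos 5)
Your second (``cleanest'') route contains a genuine error. A bi-infinite straight healthy corridor --- equivalently, an all-good column of blocks $[jC,(j+1)C)\times\ZZ$ --- occurs with probability zero for every $p>0$, so ``the existence of long runs of good blocks'' is not a one-dimensional fact you can reduce to: in one dimension you get arbitrarily long but only \emph{finite} good runs, and a finite healthy slab is not a barrier, since infection simply chains around it from above or below. An infinite healthy barrier must wiggle, and this is precisely where the hypothesis $\{u,-u\}\subset\SSS(\UU)$ earns its keep: strong stability provides a whole wedge of stable directions around each of $\pm u$, so a barrier whose boundary segments point anywhere inside that wedge is still closed under $\UU$. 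Mere stability of $\pm u$, which is all a straight strip would need, is not enough, and indeed your reduction never visibly uses the ``strongly'' in the hypothesis. You must also keep the boundary directions off the finite set $F(\UU)$ of forbidden directions, to exclude the infections of cases (2)--(3) in Lemma~\ref{lem:bordersSurvive}; you mention forbidden directions only in passing, but in general $u$ must be replaced by a nearby $u'$ with $N_\varepsilon(\pm u')\subset\SSS\setminus F(\UU)$. (There is also a directional slip: with $u(\pi/2)$ and $u(3\pi/2)$ the strongly stable pair, a barrier must run roughly \emph{horizontally}, with boundary normal to the vertical axis, rather than being a ``vertical corridor''.)

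The paper's proof implements the wiggly barrier in a few lines: choose $u'$ and $\varepsilon>0$ with $N_\varepsilon(\pm u')\subset\SSS\setminus F(\UU)$, tile $\ZZ^2$ with congruent rhombi whose sides are perpendicular to the four directions $u(\theta(\pm u')\pm\varepsilon/2)$ and large enough to contain a disc of radius $\rr(\UU)$, and observe that for small $p$ each rhombus is fully healthy with probability exceeding the oriented site percolation threshold, independently across rhombi. An infinite oriented path of healthy rhombi therefore exists a.s.; its union is a strip of width at least $\rr(\UU)$ whose boundary is everywhere normal to a direction in $\SSS\setminus F(\UU)$, so by the Lemma~\ref{lem:bordersSurvive} case analysis it stays healthy forever, and percolation does not occur. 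Your first (causation-tree/Peierls) route does invoke oriented percolation and is conceptually close to this, but you correctly flag its central step --- that the causation tree must cross a long path of bad blocks --- as unresolved, and then abandon it; the rhombus-tiling coupling is exactly the device that replaces that unresolved step with an elementary one.
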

\begin{proof}
 Choose $u' \in \SS$ and $\varepsilon > 0$ such that $N_{\varepsilon}(u'), N_{\varepsilon}(-u') \subset \SSS \setminus F(\UU)$. Tile $\ZZ^2$ with identical rhombi, whose sides are perpendicular to the four directions $u(\theta(\pm u') \pm \varepsilon/2)$, and which are large enough to contain a circle of radius $r \geq \rr(\UU)$. If $p > 0$ is small enough then every rhombus is initially fully healthy with probability larger than the critical probability for oriented site percolation, independently of all other rhombi. Hence in the tiling we almost surely have an infinite ``increasing'' path of fully healthy rhombi which, by the choice of $u'$, $\varepsilon$ and $r$, remains healthy forever.
\end{proof}

\section{Open problems}
\label{sec:openProblems}

When $p>p_c$, the sorts of questions one typically asks of critical bootstrap and $\UU$-bootstrap percolation become relevant to subcritical $\UU$-bootstrap percolation. For example, one would like to know about the distribution of the occupation time $T$ of the origin, and in particular, to what extent this time is concentrated, and how its expectation behaves as $p \searrow p_c$. These questions have been extensively studied in the case of the $r$-neighbour model on $\ZZ^d$ and are the subject of a number of recent results for critical update families in $\UU$-bootstrap percolation. It is natural to ask whether similar behaviour occurs in the subcritical setting. Some of the following questions (e.g., Question \ref{qu:scaling} and \ref{qu:critBehaviour}) have already been addressed in \cite{glassTransition} for models that can be coupled with oriented site percolation. However, the methods used in \cite{glassTransition} strongly depend on the coupling idea and cannot be applied to ``typical'' subcritical update families. It is therefore unclear whether the models with no two opposite strongly stable directions share similar behaviour.

\begin{question}
\label{qu:scaling}
{\bf (Scaling limit of $T$.)} What is the behaviour of\;$T$ as $p \searrow p_c$? In particular, does $T$ tend to infinity, and if so, what is the limiting dependence of $T$ on $p-p_c$?
\end{question}

The non-triviality of the critical probabilities of subcritical $\UU$-bootstrap percolation models also opens up the area to the sorts of questions one typically asks of traditional Bernoulli (site or bond) percolation. The difficulty of answering these questions is likely to be correlated with the difficulty of answering the corresponding questions in Bernoulli percolation: for example, determining the exact value of $p_c$, or even obtaining good bounds on $p_c$, for any non-trivial subcritical update family, is likely to be a hard problem. Similarly, properties conjectured to have critical exponent behaviour in Bernoulli percolation, such as the distribution of cluster sizes, are likely to be hard to analyse in the subcritical $\UU$-bootstrap percolation setting. However, there are many properties of site and bond percolation that are now well-understood, at least in two dimensions, and these may also be accessible in the subcritical $\UU$-bootstrap percolation setting. We give three examples: the behaviour at criticality, exponential decay of cluster sizes, and noise sensitivity.

\begin{question}
\label{qu:critBehaviour}
{\bf (Behaviour at criticality.)} Is there percolation almost surely when $p=p_c$? If so, do we have $\EE T<\infty$?
\end{question}

Let $\PP_p(0\leftrightarrow r)$ denote the probability that the origin is contained in a connected component of radius at least $r$ (according to an arbitrary norm) in the closure of $A$.

\begin{question}
\label{qu:exp}
{\bf (Exponential decay.)} For $p<p_c$, does $\PP_p(0\leftrightarrow r)$ decay exponentially in $r$?
\end{question}

Here we mean `connected' in the site percolation sense, although other notions of connectedness are also interesting. It is not clear that one should
expect a positive answer to Question \ref{qu:exp}: the droplet-like geometry of the closure of a random initial set suggests that perhaps the distribution may be much flatter.

In the context of random discrete structures, roughly speaking \emph{noise sensitivity} measures whether small perturbations of a system asymptotically cause all information to be lost. The theory of noise sensitivity was introduced by Benjamini, Kalai and Schramm \cite{BKSnoise}, who were motivated by applications to exceptional times in dynamical percolation, and it was later developed by Garban, Pete, and Schramm \cite{GPSnoise}, and by Schramm and Steif \cite{SSnoise}. Rather than giving the precise definitions we refer the reader to the articles above for an overview, and we mention that in the subcritical $\UU$-bootstrap percolation setting one can define a corresponding notion.

\begin{question}
{\bf (Noise sensitivity.)} Are subcritical $\UU$-bootstrap percolation models noise sensitive at $p=p_c$?
\end{question}

We end with a number of questions of a different flavour, which cannot be asked of critical $\UU$-bootstrap percolation or of Bernoulli percolation, but which are interesting in their own right. First, let $C^\infty$ denote the event that there exists an infinite connected component in the closure of $A$. Observe that $C^\infty$ is translation invariant, so by ergodicity it has probability either $0$ or $1$. Combining this with monotonicity, it follows that there is a critical probability $p_c^\infty=p_c^\infty(\UU)$ such that
\[
\PP_p(C^\infty) = \begin{cases} 0 \text{ if } p<p_c^\infty \\ 1 \text{
if } p>p_c^\infty. \end{cases}
\]
It is natural ask about the relationship between $p_c$ and $p_c^\infty$: trivially the inequality $p_c^\infty\leq p_c$ always holds, but is it possible to have strict inequality? Even if not, could it be that $\PP_{p_c}(C^\infty)=1$ but $\PP_{p_c}([A]=\ZZ^2)=0$?

\begin{question}
{\bf (Infinite component without percolation.)} For which subcritical $\UU$-bootstrap percolation models do we have $p_c^\infty=p_c$?
\end{question}

This question does not seem to have been studied even in the case of oriented site percolation.

Define the random variable
\[
D(n) = \frac{\big|[-n,n]^2\cap[A]\big|}{\big|[-n,n]^2\big|}.
\]
Thus, $D(n)$ is the density of the closure $[A]$ inside the square $[-n,n]^2$. Analogous to numerous phenomena, we conjecture the following.

\begin{conjecture}\label{co:density}
{\bf (Density of the closure.)} For every $p\in[0,1]$ there exists a constant $\delta(p)$ such that $D(n)$ converge in probability to a constant $\delta(p)$ as $n\to\infty$.
\end{conjecture}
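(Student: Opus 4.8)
The plan is to observe that Conjecture~\ref{co:density} follows directly from the multidimensional pointwise ergodic theorem, with the limiting constant given explicitly by $\delta(p)=\PP_p\big(0\in\closure{A}\big)$. No subcriticality is needed here: the argument applies to every update family $\UU$ and every $p\in[0,1]$, and in fact it yields almost sure convergence. The one structural input is the translation-covariance of the closure operator. Writing $T^v$ for the shift of $\{0,1\}^{\ZZ^2}$ by $v\in\ZZ^2$, so that $(T^v\omega)_x=\omega_{x+v}$, one reads off from the definition $A_{t+1}=A_t\cup\{x:\exists\,i,\ X_i+x\subset A_t\}$ that $A(T^v\omega)=A(\omega)-v$ and hence $\closure{A(T^v\omega)}=\closure{A(\omega)}-v$.

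First I would record that $F(\omega):=\mathbf{1}\big[0\in\closure{A(\omega)}\big]$ is a bounded measurable function on $\{0,1\}^{\ZZ^2}$: for each fixed $t\geq 0$ the indicator $\mathbf{1}[0\in A_t]$ depends on finitely many coordinates of $\omega$, and $\mathbf{1}[0\in\closure{A}]=\sup_{t\geq 0}\mathbf{1}[0\in A_t]$. Combined with the covariance above, this gives $\mathbf{1}\big[v\in\closure{A}\big]=F(T^v\omega)$ for every $v\in\ZZ^2$, and therefore
\[
D(n)=\frac{1}{\big|[-n,n]^2\big|}\sum_{v\in[-n,n]^2}F(T^v\omega),
\]
so that $D(n)$ is exactly the average of $F$ over the box $[-n,n]^2$ along the $\ZZ^2$-action by translations.

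Next I would invoke the ergodic theorem. The product measure $\PP_p$ is invariant under the $\ZZ^2$-shift action and is ergodic --- indeed strongly mixing, as is any product measure under the shift --- and the centred cubes $[-n,n]^2$ form a F{\o}lner (averaging) sequence for $\ZZ^2$. Hence, by the Wiener pointwise ergodic theorem for $\ZZ^d$-actions, $D(n)\to\EE_p[F]$ both almost surely and in $L^1(\PP_p)$; in particular $D(n)$ converges in probability to the constant $\delta(p):=\EE_p[F]=\PP_p\big(0\in\closure{A}\big)$, which is what the conjecture asserts.

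The main point requiring care is only the bookkeeping above: checking measurability of $F$, the covariance identity $\closure{A+v}=\closure{A}+v$, and that centred cubes really are a legitimate averaging sequence, so that the genuinely multidimensional ergodic theorem applies rather than its one-dimensional special case. I do not expect a serious obstacle. What does look genuinely hard --- and is perhaps what the conjecture is really pointing at --- is to say anything quantitative about the function $p\mapsto\delta(p)$ beyond monotonicity (which is immediate by coupling $\Bin(\ZZ^2,p)$ across $p$): for instance its continuity, its behaviour as $p\searrow p_c$, or a central limit theorem for the fluctuations of $D(n)$ about $\delta(p)$. Such refinements would require model-specific input of the type used in the proof of Theorem~\ref{thm:subpc}, not soft ergodic-theoretic considerations.
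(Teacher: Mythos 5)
The statement you are addressing is posed in the paper as Conjecture~\ref{co:density}, in the section on open problems; the authors give no proof, so there is no argument of theirs to compare against. Your ergodic argument is correct and in fact resolves the conjecture exactly as stated. The key points all check out: the covariance identity $\closure{A(T^v\omega)}=\closure{A(\omega)}-v$ follows from translation invariance of the update rule; $F(\omega)=\mathbf{1}\big[0\in\closure{A(\omega)}\big]$ is measurable as the monotone limit of the cylinder indicators $\mathbf{1}[0\in A_t]$; the Bernoulli product measure $\PP_p$ is mixing, hence ergodic, under the $\ZZ^2$-shift; and the centred boxes $[-n,n]^2$ form a F{\o}lner sequence satisfying the Tempelman regularity condition, so the multiparameter pointwise ergodic theorem gives $D(n)\to\EE_p[F]=\PP_p\big(0\in\closure{A}\big)$ almost surely and in $L^1$, hence in probability. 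The identification $\delta(p)=\PP_p\big(0\in\closure{A}\big)$ is the natural one and is consistent with the surrounding discussion in the paper: $\delta(p)\geq p$ since $A\subset\closure{A}$, and $\delta(p)\to 0$ as $p\to 0$ for subcritical families by Theorem~\ref{thm:subpc}, while $\delta(p)=1$ for all $p>0$ whenever $p_c(\ZZ^2,\UU)=0$.

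Your closing paragraph correctly locates where the real difficulty lies. The convergence is soft, applies to all update families (not only subcritical ones), and says nothing about the quantitative questions the authors raise immediately after the conjecture, such as continuity of $\delta$ at $p_c$ and whether $\delta(p)-p=o(p)$ as $p\to 0$. Those would require the model-specific machinery developed in Sections~\ref{sec:covers} and~\ref{sec:proof} rather than ergodic-theoretic generalities, and it is plausible that the authors stated the conjecture with those refinements implicitly in mind; as written, however, it follows from the Wiener ergodic theorem just as you describe.
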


This conjecture is one formulation of the assertion that sites in the closure of $A$ should be reasonably well scattered. If Conjecture \ref{co:density} is true, one would like to know if $\delta(p)$ is continuous at $p=p_c$, and whether we have $\delta(p)-p=o(p)$ as $p\to 0$.



At present, essentially nothing is known about $\UU$-bootstrap percolation in higher dimensions. Let $d\geq 2$ be an integer and let $\UU$ be a $d$-dimensional update family. We define the \emph{stable set} in $d$ dimensions completely analogously to in 2 dimensions. First, given $(d-1)$-sphere $S^{d-1} \subset \RR^d$, for each $u \in S^{d-1}$, let $\HH_u^d:=\{x\in\ZZ^d:\langle x,u \rangle<0\}$ be a half-space normal to $u$. Then the stable set is
\[
\SS = \SS(\UU) = \big\{ u\in S^{d-1} : [\HH_u^d]=\HH_u^d \big\}.
\]

Let $\mu : \mathcal{L}(S^{d-1})\to\RR$ be the Lebesgue measure on the collection of Lebesgue-measurable subsets of $S^{d-1}$. We define the $d$-dimensional family $\UU$ to be \emph{subcritical} if $\mu(H\cap\SS)>0$ for every hemisphere $H\subset S^{d-1}$. Note that this corresponds to the definition given at the start of the paper in the special case $d=2$. We conjecture the following.

\begin{conjecture}\label{con:highd}
Fix an integer $d\geq 2$ and let $\UU$ be a $d$-dimensional update family. Then $p_c(\ZZ^d,\UU)>0$ if and only if $\UU$ is subcritical.
\end{conjecture}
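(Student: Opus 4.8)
The plan is to prove the two implications of Conjecture~\ref{con:highd} separately. The implication ``$\UU$ subcritical $\Rightarrow p_c(\ZZ^d,\UU)>0$'' should follow the blueprint of the proof of Theorem~\ref{thm:subpc}, with the triangle of stable directions replaced by a higher-dimensional polytope and the barriers replaced by codimension-one membranes; this is where the bulk of the technical work lies, but also where I am reasonably confident the argument goes through. The reverse implication — a non-subcritical $d$-dimensional family has $p_c(\ZZ^d,\UU)=0$ — is the $d$-dimensional analogue of the supercritical/critical results of Bollob\'as, Smith and Uzzell~\cite{neighbourhoodBootstrap}, and appears to need genuinely new input for $d\geq 3$.

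For the forward implication, the first step is to replace the triangle of strongly stable directions by a polytope. From the description of stability in $d$ dimensions (a direction $u$ is stable exactly when $\max_{x\in X_i}\langle x,u\rangle\geq 0$ for every $i$) one sees that $\SS$ is a finite union of spherically convex polytopes; hence $\mu(H\cap\SS)>0$ for every hemisphere $H$ forces $\SSS=\Int\SS$ to be non-empty, and a separation-plus-Carath\'eodory argument then produces finitely many $u_1,\dots,u_k\in\SSS$ with the origin in the interior of their convex hull. One perturbs these into general position and away from a forbidden set $F(\UU)$ — the unit normals to the facets of the convex hulls $\operatorname{conv}(X_i)$, together with, in dimension $\geq 3$, certain lower-dimensional genericity conditions coupling the faces of the $\operatorname{conv}(X_i)$ with the ridge directions of the piecewise-flat membranes we will build (an aspect with no analogue when $d=2$) — so that each $N_\varepsilon(u_t)\subset\SSS\setminus F(\UU)$. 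The bounded polytopes $\bigcap_{t=1}^k\HH_{u_t,a_t}$ are then finite and, by the case analysis of Lemma~\ref{lem:bordersSurvive} (a boundary site is infected either by a rule destabilizing a direction in some $N_\varepsilon(u_t)$, impossible since $N_\varepsilon(u_t)\subset\SS$, or by a rule whose convex hull fits flat against the local membrane, impossible since $N_\varepsilon(u_t)\cap F(\UU)=\emptyset$), closed under $\UU$; moreover they stay closed when their facets are replaced by slightly wiggly membranes. These robust polytopes play the role of triangular covers.

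With robust polytope covers available, the renormalization of Sections~\ref{sec:goodSquares}--\ref{sec:proof} transfers with only bookkeeping changes: nested tilings of $\ZZ^d$ by cubes of side lengths $\Delta_1\ll\Delta_2\ll\cdots$, the same notions of $(i)$-good/bad cubes and $(i)$-clean sites but with dimension-dependent exponents (chosen so that an $(i)$-bad cube inside an $(i{+}1)$-good cube is isolated at scale $g_i$, that $\sum_i q_i<\infty$, and that $p\sim\Delta_1^{-\delta-d}$), and polytope covers of unions of pairwise adjacent bad cubes. The analogue of Lemma~\ref{lem:coverExists} is proved by the same circumscribed-polytope construction, with the $u_j$ chosen so that the corner regions near the vertices of the outer polytope remain large enough to contain cubes of the tiling. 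The heart of the argument, and the step I expect to be the main obstacle, is the analogue of Lemma~\ref{lem:barriersExist}: a barrier roughly perpendicular to $u_t$ is now a thickened piecewise-flat $(d-1)$-membrane, and one must show that between two clean $(d-1)$-dimensional pieces in good cubes, separated along the $u_t$ direction, there is such a membrane avoiding all lower-scale polytope covers, by induction on the scale. In two dimensions a barrier is a thickened curve and bypassing an isolated bad region is a one-parameter detour (Figure~\ref{fig:detour}); in $d\geq 3$ the membrane is codimension one, so each obstacle must be bypassed by bending it around in the $d-1$ transverse directions simultaneously, the resulting detour must be cut into flat sub-pieces small enough for the induction hypothesis while the whole object stays connected and of bounded complexity, one must keep it within a controlled angular wedge of $\perp u_t$ uniformly over all scales, and one must track how the $k$ membranes meet along their common $(d-2)$-dimensional ridges. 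Making the scale-by-scale angular bookkeeping close in this geometry (the analogue of $\sigma_m-\sigma_{m+1}>\varepsilon/(2\Delta_m^\gamma)$) is the real difficulty; I expect the remainder of the deduction — the analogue of Lemma~\ref{lem:mainFollows}, the first-moment bound on the origin lying in some cover, and the boundary case ``$\SS=S^{d-1}\Leftrightarrow p_c=1$'' — to be routine adaptations.

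For the reverse implication, when a hemisphere $H$ satisfies $\mu(H\cap\SS)=0$ most directions in $H$ are unstable, so the corresponding half-spaces grow to fill $\ZZ^d$; turning this into a nucleation statement — that a positive density of infected sites almost surely grows without bound — is, for $d\geq 3$, not currently available and is presumably much harder than its two-dimensional counterpart (one would expect a finer classification of behaviours than the trichotomy of~\cite{neighbourhoodBootstrap}). I would therefore regard Conjecture~\ref{con:highd} as having two essentially independent hard halves: the subcritical half, accessible by the membrane-renormalization sketched above, and the non-subcritical half, which requires a higher-dimensional growth theory for $\UU$-bootstrap percolation.
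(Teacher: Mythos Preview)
The statement you are addressing is Conjecture~\ref{con:highd}, which the paper explicitly leaves open: there is no proof in the paper to compare against. The authors merely remark that they ``believe that Conjecture~\ref{con:highd} should follow from similar methods to those used in the present paper, but with significant technical complications.'' Your proposal is therefore not a proof but a research plan, and to your credit you say as much: you flag the codimension-one membrane version of Lemma~\ref{lem:barriersExist} as ``the main obstacle'' and describe the reverse implication as ``not currently available'' for $d\geq 3$.

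As a plan your sketch is sensible and aligns with what the paper suggests. The polytope-of-strongly-stable-directions replacement for the triangle, the forbidden-set genericity, the analogue of Lemma~\ref{lem:bordersSurvive}, and the renormalization bookkeeping are all reasonable extrapolations. But none of this constitutes a proof: the membrane-detour induction you identify as the crux is genuinely unresolved (controlling the ridge geometry and the angular budget simultaneously across scales is exactly the ``significant technical complication'' the authors allude to), and the non-subcritical half requires a $d$-dimensional growth theory that did not exist when the paper was written. So the honest assessment is that there is no gap in your reasoning to point to, because you have not claimed a proof; what you have written is a correct diagnosis of why the statement is a conjecture rather than a theorem.
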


We believe that Conjecture~\ref{con:highd} should follow from similar methods to those used in the present paper, but with significant technical complications.

Our final question concerns directed triangular bootstrap percolation, which was the example subcritical $\UU$-bootstrap percolation process given in the introduction. The lower bound in Corollary \ref{cor:DTBP} obtained by analysing our proof is likely to be far from the truth. What is the correct value of $p_c(\triD,2)$?

\begin{question}
Can one obtain better bounds on the critical probability $p_c(\triD,2)$ for DTBP than those given in Corollary \ref{cor:DTBP}?
\end{question}

Finally we remark that there are many other interesting questions that one could and should ask about subcritical $\UU$-bootstrap percolation -- too many to list here individually.

\bibliographystyle{amsplain}

 \bibliography{mylargebib}

\end{document}